\documentclass{amsart}
\usepackage[utf8]{inputenc}
\usepackage{amsmath,amssymb,mathtools, amsxtra}
\usepackage{tikz, tikz-cd}
\usepackage{color}
\usepackage[all]{xy}
\usepackage{caption}
\usetikzlibrary{calc, intersections, through}
\usepgflibrary{arrows}
\usetikzlibrary{positioning}
\usepgflibrary{arrows}
\usepackage{enumitem}
\usepackage{mathtools}

\tikzstyle{wbullet}=[circle, draw=black, fill=white, thick, inner sep=0pt, minimum size=1.5mm]
\tikzstyle{bbullet}=[circle, draw=black, fill=black, inner sep=0pt, minimum size=1.5mm]

\usepackage{array}
\usepackage{pdflscape}
\usepackage{afterpage}
\usepackage{capt-of}
\usepackage{lipsum}
\usepackage{graphicx,caption}
\usepackage{bbm}
\usepackage[symbol]{footmisc}
\usepackage{xcolor}

\usepackage[
 hypertexnames=false,
pdfpagemode=UseNone,
breaklinks=true,
extension=pdf,
urlcolor=blue,
colorlinks=true,
linkcolor=blue,
citecolor=blue,
]{hyperref}
\hypersetup{pdfauthor={Name}}

 \usepackage{todonotes}

\setlength{\headheight}{8pt} \setlength{\textheight}{23cm}
\setlength{\textwidth}{14cm} \setlength{\oddsidemargin}{1.2cm}
\setlength{\evensidemargin}{1.2cm} \setlength{\topmargin}{-.5cm}

\newtheorem{thm}{Theorem}[section]

\newtheorem{lem}[thm]{Lemma}
\newtheorem{lem-defn}[thm]{Lemma-Definition}

\newtheorem{prop}[thm]{Proposition}
\newtheorem{cor}[thm]{Corollary}
\theoremstyle{definition}
\newtheorem{defn}[thm]{Definition}
\newtheorem{nota}[thm]{Notation}

\newtheorem{rmk}[thm]{Remark}

\newtheorem{ex}[thm]{Example}
\theoremstyle{remark}

\numberwithin{equation}{section}

\newcommand{\op}{\mathrm}

\newcommand{\CC}{\mathbb{C}}

\newcommand{\QQ}{\mathbb{Q}}

\newcommand{\ZZ}{\mathbb{Z}}

\newcommand{\hG}{\widehat{G}}
\newcommand{\hH}{\widehat{H}}

\newcommand{\sE}{\mathcal{E}}
\newcommand{\sF}{\mathcal{F}}
\newcommand{\sH}{\mathcal{H}}

\newcommand{\sO}{\mathcal{O}}

\newcommand{\sU}{\mathcal{U}}

\newcommand{\sZ}{\mathcal{Z}}

\newcommand{\gp}{\mathfrak{p}}

\newcommand{\Aut}{\mathrm{Aut}}

\newcommand{\ch}{\mathrm{ch}}
\newcommand{\codim}{\mathrm{codim}}
\newcommand{\even}{\mathrm{even}}

\newcommand{\id}{\mathrm{id}}

\newcommand{\Ind}{\mathrm{Ind}}

\newcommand{\pt}{\mathrm{pt}}

\newcommand{\reg}{\mathrm{reg}}
\newcommand{\Res}{\mathrm{Res}}

\newcommand{\rk}{\op{rk}}

\newcommand{\Supp}{\mathrm{Supp}}

\newcommand{\td}{\mathrm{td}}

\newcommand{\Tr}{\mathrm{Tr}}

\title[Chevalley--Weil formula for  higher dimensional compact complex manifolds]{
The Chevalley--Weil formula for finite group actions on higher dimensional compact complex manifolds}
 \author{ Wenfei Liu}
 \address{Wenfei Liu \\School of Mathematical Sciences\\Xiamen University\\Xiamen, Fujian 361005\\P.R.~China}
 \email{wliu@xmu.edu.cn}

  \author{Renjie Lyu}
 \address{Renjie Lyu \\School of Mathematical Sciences\\Xiamen University\\Xiamen, Fujian 361005\\P.R.~China}
 \email{r.lyu@xmu.edu.cn}
 
\date{\today}
\subjclass[2020]{Primary: 14J50; Secondary: 20C15}
\keywords{Compact complex manifold, finite Group action, Chevalley--Weil formula, holomorphic Lefschetz fixed-point formula}
\begin{document}

\begin{abstract}
Building on the Atiyah--Singer holomorphic Lefschetz fixed-point theorem, we define ramification modules associated to the fixed loci of a finite group acting on a compact complex manifold. This allows us to generalize the  Chevalley--Weil formula for compact Riemann surfaces to higher dimensions. More precisely, let $G$ be a finite group acting on a compact complex manifold $X$, and let $\sE$ be a $G$-equivariant locally free sheaf on $X$. Then, in the representation ring $R(G)_\QQ$, we have
\[
\chi_G(X, \sE):=\sum_{i=0}^{\dim X}(-1)^i[H^i(X, \sE)]=\frac{1}{|G|}\chi(X,\sE)[\CC[G]] + \sum_Z\Gamma(\sE)_Z
\]
where $Z$ runs over all connected components of the fixed-point sets $X^g$ for $g\in G$, and each $\Gamma(\sE)_Z\in R(X)_\QQ$, called the \emph{ramification module} at $Z$, depends only on the restriction $\sE|_Z$ and the normal bundle $N_{Z/X}$ as $G_Z$-equivariant bundles. We illustrate the computation of $\Gamma(\sE)_Z$ in several special cases and provide a detailed example for faithful actions of $G\cong(\ZZ/2\ZZ)^n$ on a compact complex surface.
\end{abstract}

\maketitle
\tableofcontents

\section{Introduction}
Let $X$ be a compact Riemann surface, and $G\subset\Aut(X)$ a finite subgroup of automorphisms. The classical Chevalley--Weil formula (\cite{CW34, Wei35}) expresses the $G$-module $H^0(X, \omega_X^{\otimes n})$ of $n$-differentials as a rational multiple of the regular representation $\CC[G]$, plus a correction term determined by the branched locus of the quotient map $X\rightarrow X/G$. This result was generalized  in \cite{EL80} to a formula for more general $G$-equivariant locally sheaves on smooth projective tame $G$-curves over an arbitrary algebraically closed field. Subsequent work has produced many further refinements with an arithmetic flavor; see \cite{LL25} for a relatively complete list of references. Recently, the Chevalley--Weil formula for proper singular curves has been established in \cite{Ton25, LL25}.

As explicitly illustrated in \cite{Koc05} and ~\cite{Ara22}, the Chevalley--Weil formula for \break $G$-equivariant locally free sheaves on a compact Riemann surface can be deduced from a more general fixed-point formulism (\cite{AS68III}). In this paper, we extend this framework to higher dimensions.
 Unlike the approach in \cite{EL80}, which relies on the quotient map $X\rightarrow X/G$, our method applies the Atiyah–Singer holomorphic Lefschetz fixed-point theorem directly to capture the contributions from the fixed loci via the so-called ramification modules; see Definition~\ref{defn: Gamma_Z}.

\begin{thm}\label{thm: main}
 Let $X$ be a compact complex manifold, possibly disconnected and non-equidimensional, and let $G$ be a finite group acting on $X$. Let $\sE$ be a $G$-equivariant locally free sheaf on $X$. Let $\chi_G(X, \sE):=\sum_j (-1)^j [H^j(X, \sE)]$ be the $G$-Euler characteristic of $\sE$,
where $[H^j(X, \sE)]$ denotes the class of $H^j(X, \sE)$ in the Grothendick group $R(G)$ of $G$-modules. Then we have an equality in $R(G)_\QQ$:
\begin{equation}\label{eq: CW main thm}
  \chi_G(X, \sE):=\sum_{i=0}^{\dim X}(-1)^i[H^i(X, \sE)] = \frac{1}{|G|}\chi(X,\sE)[\CC[G]]+ \sum_Z\Gamma(\sE)_Z  
\end{equation}
where $Z$ runs through the components of the fixed loci $X^g$ with $g\in G$, and $\Gamma(\sE)_Z\in R(G)_\QQ$ is the ramification module at $Z$, given in Definition~\ref{defn: Gamma_Z}.
\end{thm}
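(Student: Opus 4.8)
The plan is to reduce \eqref{eq: CW main thm} to an equality of characters and then evaluate its left-hand side by the Atiyah--Singer holomorphic Lefschetz fixed-point theorem \cite{AS68III}. Since the character map $R(G)_\QQ\to\{\text{class functions on }G\}$ is injective, it suffices to prove, for every $g\in G$,
\[
\tr(g\mid\chi_G(X,\sE))=\frac{1}{|G|}\chi(X,\sE)\,\tr(g\mid\CC[G])+\sum_Z\tr(g\mid\Gamma(\sE)_Z).
\]
The left-hand side is by definition the holomorphic Lefschetz number $L(g,\sE):=\sum_j(-1)^j\tr(g\mid H^j(X,\sE))$, and since $\tr(g\mid\CC[G])=|G|\,\delta_{g,e}$, the first term on the right equals $\chi(X,\sE)\,\delta_{g,e}$, contributing only when $g=e$.

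First I would feed $L(g,\sE)$ into the fixed-point theorem. As $g$ has finite order its action linearizes near each fixed point, so $X^g$ is a disjoint union of compact complex submanifolds $W$; at a point of $W$ the $1$-eigenspace of $dg$ is exactly the tangent space of $W$, whence $g$ acts on the normal bundle $N_{W/X}$ without the eigenvalue $1$, the fixed locus is \emph{clean}, and the theorem yields
\[
L(g,\sE)=\sum_{W\subseteq X^g}a_W(g,\sE).
\]
Here $a_W(g,\sE)$ is the local index contribution: an integral over $W$ of the $g$-twisted Chern character of $\sE|_W$ against $\td(TW)$, divided by a characteristic class built from the $g$-action on $N_{W/X}$ which is invertible precisely because $g$ has no eigenvalue $1$ on $N_{W/X}$. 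In particular $a_W(g,\sE)$ depends only on $\sE|_W$ and $N_{W/X}$ as $\langle g\rangle$-equivariant bundles, and for $g=e$ it reduces by Hirzebruch--Riemann--Roch to $\chi(W,\sE)$, so that $L(e,\sE)=\chi(X,\sE)$.

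It remains to recognize $\sum_{W\subseteq X^g}a_W(g,\sE)-\chi(X,\sE)\,\delta_{g,e}$ as $\sum_Z\tr(g\mid\Gamma(\sE)_Z)$; this is exactly what Definition~\ref{defn: Gamma_Z} is built to achieve. There the ramification module $\Gamma(\sE)_Z\in R(G)_\QQ$ is assembled from the Atiyah--Singer contribution $a_Z(\,\cdot\,,\sE)$ at $Z$, with its regular-representation part split off into the term $\frac{1}{|G|}\chi(X,\sE)[\CC[G]]$; by construction $\tr(g\mid\Gamma(\sE)_Z)$ vanishes unless $Z$ occurs among the components of $X^g$ --- equivalently unless $g\in G_Z$ acts on $N_{Z/X}$ without the eigenvalue $1$ --- and there it recovers $a_Z(g,\sE)$ (after the appropriate packaging over the $G$-orbit of $Z$). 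Granting the definition, the character identity follows: for $g\ne e$ the $Z$ that contribute are precisely the connected components of $X^g$, so $\sum_Z\tr(g\mid\Gamma(\sE)_Z)=\sum_{W\subseteq X^g}a_W(g,\sE)=L(g,\sE)$ while $\chi(X,\sE)\,\delta_{g,e}=0$; and for $g=e$ the splitting-off is calibrated so that $\sum_Z\tr(e\mid\Gamma(\sE)_Z)=0$, so the right-hand side is $\chi(X,\sE)=L(e,\sE)$. Because every object in sight decomposes over the connected components of $X$, the possibly disconnected and non-equidimensional case needs no extra argument.

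I expect the real difficulty to lie not in this comparison but in Definition~\ref{defn: Gamma_Z} itself and the lemma that legitimizes it: one has to verify that the class function built from the fixed-point contributions is the character of an \emph{actual} element of $R(G)_\QQ$. Two points are delicate. First, rationality: as $g$ ranges over a cyclic subgroup of $G_Z$, the eigenvalue data of the $\langle g\rangle$-actions on $\sE|_Z$ and $N_{Z/X}$, and the roots of unity manufactured from them, are permuted by $\mathrm{Gal}(\overline{\QQ}/\QQ)$, and the contributions must fit together so that every inner product with an irreducible character is rational --- this is what forces the precise packaging (grouping the contributions over $G_Z$, inducing up to $G$, and averaging over the orbit of $Z$). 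Second, the virtual dimension $\tr(e\mid\Gamma(\sE)_Z)$ is \emph{not} pinned down by the fixed-point data; it has to be fixed, by splitting off the regular-representation contribution, so that simultaneously $\Gamma(\sE)_Z\in R(G)_\QQ$ and $\sum_Z\dim\Gamma(\sE)_Z=0$ (equivalently, so that all of $\chi(X,\sE)$ is carried by the term $\frac{1}{|G|}\chi(X,\sE)[\CC[G]]$). Once Definition~\ref{defn: Gamma_Z} is in place with these properties, the proof of Theorem~\ref{thm: main} is the short character computation above.
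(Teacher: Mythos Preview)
Your strategy---reduce to a character identity and feed the left side through Atiyah--Singer---is the paper's, and you are right that once Definition~\ref{defn: Gamma_Z} is in place the proof is short. But your per-stratum claim that ``$\tr(g\mid\Gamma(\sE)_Z)$ vanishes unless $Z$ occurs among the components of $X^g$, and there it recovers $a_Z(g,\sE)$'' is not correct as written. Since $\Gamma(\sE)_Z$ is a sum of inductions $\Ind_H^G(\theta_H\cdot\alpha_{Z,H})$ over $H\in\sH_Z$, its trace at $g$ involves every $s\in G$ with $s^{-1}gs\in H$; the factor $\theta_H$ forces $H=\langle s^{-1}gs\rangle$, so a nonzero contribution arises whenever $Z$ is a component of $X^{s^{-1}gs}$, i.e.\ whenever $sZ$ (not $Z$) is a component of $X^g$. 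The global sum $\sum_Z\tr(g\mid\Gamma(\sE)_Z)$ does still evaluate to $L(g,\sE)$---each $s\in G$ contributes $\frac{1}{|G|}\sum_{W\subset X^g}a_W(g,\sE)$ after reindexing $W=sZ$---but that bookkeeping is the actual content of the argument, and you have not carried it out.

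The paper sidesteps this trace-of-induction computation by running the argument in the opposite direction. Rather than checking characters elementwise, it applies the Artin-type identity of Proposition~\ref{prop: recover chi},
\[
\chi_G(X,\sE)=\sum_{H\subset G\text{ cyclic}}\tfrac{|H|}{|G|}\Ind_H^G\bigl(\theta_H\cdot\Res_H^G\chi_G(X,\sE)\bigr),
\]
directly in $R(G)_\QQ$, uses Theorem~\ref{thm: AS} to rewrite each $\theta_H\cdot\Res_H^G\chi_G(X,\sE)$ as $\sum_{Z\in\sZ_H}\int_Z\theta_H\ch_H(\sE|_Z)\td_H(Z)$, and then simply reorders the double sum $\sum_H\sum_{Z\in\sZ_H}=\sum_Z\sum_{H\in\sH_Z}$ to read off Definition~\ref{defn: Gamma_Z}. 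Your diagnosis of the technical difficulty is also slightly off: the delicate point is not Galois-rationality of eigenvalues but that Atiyah--Singer only pins down an element of the \emph{localization} $R(H)_h$, and one must lift $\theta_{Z,H}\cdot(\ch_H\lambda_{-1}N_{Z/X}^*)^{-1}$ back to $H^{\mathrm{even}}(Z,\QQ)\otimes R(H)$---this is Lemma~\ref{lem:quasi-inverse-class}, and it is what makes $\Gamma(\sE)_Z$ a genuine element of $R(G)_\QQ$.
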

One has $\chi_G(X,\sE)=\frac{1}{|G|}\chi(X,\sE)[\CC[G]]$ if the action of $G$ on $X$ is free. Thus, the ramification modules $\Gamma(\sE)_Z$ can be viewed as correction terms arising from the deviation of the $G$-action being free. 

The key part in proving Theorem~\ref{thm: main} is the definition of the ramification modules \(\Gamma(\sE)_Z\). For $g\in G$, let $H:=\langle g\rangle$ be the cyclic group it generates, and let $X^H$ be the fixed locus of $H$. Let $N^*:=N_{X^H/X}^*$ be the conormal bundle of $X^H$, and let $\lambda_{-1}N^* = \sum_{i\geq 0}[\wedge^i N^*]\in K_{H}(X^H)$ the fundamental class of $N^*$ in the $H$-equivariant $K$-group of $X^H$. Then Atiyah--Singer's holomorphic Lefschetz fixed-point theorem (Theorem~\ref{thm: AS}) computes the trace on the $G$-Euler characteristic:
\begin{equation}
\label{eqs:AS-Lefschetz-trace-formula}
\Tr(g; \chi_G(X, \sE)) = \Tr\left(g;\int_{X^H} \frac{\ch_H(\sE|_{X^H})\td(X^H)}{\ch_H(\lambda_{-1}N^*)}\right).
\end{equation}
Here, $\ch_H$ is the $H$-Chern character (Definition~\ref{defn: H-Chern}): 
\[
K_H(X^H) = K(X^H)\otimes R(H)\xrightarrow{\ch_H} H^{\even}(X^H, \ZZ)\otimes R(H).
\]
Let \(\gp_g\coloneqq \ker(\Tr_g\colon R(H)\rightarrow \CC)\) be the prime ideal 
of virtual \(H\)-modules with vanishing trace at \(g\), and let \(R(H)_g\) be
the localization of $R(H)$ at \(\gp_g\). The trace \(\Tr(g; \lambda_{-1}N^*)\) is non-zero, so $\lambda_{-1}N^*$ is a unit in $R(H)_g$.
The trace formula~\eqref{eqs:AS-Lefschetz-trace-formula} then implies that the integral expression
\(\int_{X^H} \frac{\ch_H(\sE|_{X^H})\td(X^H)}{\ch_H(\lambda_{-1}N^*)}\) 
is the image of \(\chi_G(X, \sE)\) in \(R(H)_g\) under the homomorphism 
\[
R(G)\xrightarrow{\Res^G_H} R(H)\xrightarrow{\mathrm{loc}_g} R(H)_g,
\]
where \(\Res^G_H\) is the restriction map and \(\mathrm{loc}_g\) is the localization at $\gp_g$. Thus, the problem reduces to recovering the global $G$-Euler characteristic $\chi_G(X,E)$ from its localized restrictions to cyclic subgroups.

Using Artin's theorem in Section~\ref{subsec:artin's theorem}, one can theoretically recover the virtual \(G\)-module \(\chi_G(X, \sE)\) from its restrictions to all cyclic subgroups of \(G\) (Proposition~\ref{prop: recover chi}):
\begin{equation}\label{eq: recover chi_G from restriction}
\chi_G(X, \sE) = \sum_{H\subset G \text{ cyclic}} \frac{|H|}{|G|}\Ind_H^G(\theta_H\otimes \Res_H^G\chi_G(X, \sE)),
\end{equation}
where $\theta_H$ is the characteristic module on the generators of $H$ (see Definition~\ref{defn: theta_H}). 

Our goal is to reformulate the \(H\)-module \(\theta_H\otimes \Res_H^G\chi_G(X, \sE)\) in terms of ramification modules, which are characterized by the localized fixed-point contributions. For any cyclic subgroup \(H\subset G\) and each component \(Z\) in \(X^H\), we show in Lemma~\ref{eqs:theta_ZH} the existence of an element \(\tau_{Z,H}\in R(H)_\QQ\) such that
\[
\theta_H\otimes \Res_H^G\chi_G(X, \sE) = \sum_{Z}\int_Z\theta_H\tau_{Z,H}\ch_H(\sE|_{Z})\td(Z).
\]
This leads to Definition~\ref{defn: Gamma_Z} for the ramification module 
\[
\Gamma(\sE)_Z=\sum_{H\in \sH_Z} \frac{|H|}{|G|}\Ind_H^G \left(\theta_H \int_Z\ch_H(\sE|_Z)\tau_{Z,H}\td(Z)\right),
\]
where \(\mathcal{H}_Z\) is the set of cyclic subgroups \(H\subset G\) for which \(Z\) is a component of \(X^H\). The equality in Theorem~\ref{thm: main} is then a direct consequence of this definition.

The Chevalley--Weil formula~\eqref{eq: CW main thm} 
facilitates the computation of the multiplicity $\mu_M \chi_G(X, \sE)$ of an irreducible $G$-module $M$ in
the virtual $G$-module $\chi_G(X, \sE)$
(Corollary~\ref{cor: multiplicity}). 
Its practical utility depends on the computability of the ramification modules $\Gamma(\sE)_Z$. In Section~\ref{sec: ramification module}, we present explicit forms of these ramification modules \(\Gamma(\sE)_Z\) under one of the following conditions:
\begin{enumerate}
    \item The stabilizer $G_Z$ is cyclic (Lemma~\ref{lem: cyclic intertia}).
    \item $\codim_X(Z)\leq 1$ or $\dim Z=0$ ((Examples~\ref{ex: codim 0}--\ref{ex: codim 1}).
\end{enumerate}
In Section~\ref{sec: surface}, we determine $\chi_G(X, \sE)$ explicitly when $G\cong (\ZZ/2\ZZ)^n$ and $X$ is a compact complex surface.  In this case, both conditions (i) and (ii) are easily verified.

\medskip

\noindent{\bf Notation and Conventions.} 
Let $X$ be a set endowed with an action of a group $G$. We call $X$ a $G$-set. For a subset  $Z\subset X$, the subgroup 
\[
G_Z:=\{g\in G\mid g(x)=x,\,\forall\, x\in Z\}
\]
is called the \emph{(pointwise) stabilizer} of $Z$. 
For a subset $K\subset G$, the \emph{fixed locus} of $K$ is
\[
X^K:=\{x\mid g(x)=x, \, \forall\, g\in K\}.
\]
We denote the neutral element of $G$ by  $\id_G$ or $\id$, and the order of $G$ by $|G|$. For $g\in G$, we use $|g|$ to denote its order. 

A $G$-manifold is a $G$-set $X$ endowed with a manifold structure preserved by the $G$-action. Similarly, one defines a $G$-vector space, a complex $G$-manifold, and so on, by imposing a  structure  compatible with the $G$-action. A quasi-coherent \emph{$G$-sheaf} (or \emph{$G$-equivariant sheaf}) on a complex $G$-manifold $X$ means a quasi-coherent sheaf $\sF$ together with isomorphisms $\Phi_g\colon g^*\sF\rightarrow \sF$ for each $g\in G$ satisfying the following compatibility condition (\cite{Bri25}):
\[
\Phi_{gh} = \Phi_h\circ h^*(\Phi_g),\quad  \text{for all }\, g,h\in G.
\]


Let $R$ be a commutative ring with unit. For two $R$-modules $M$ and $N$, their tensor product $M\otimes_R N$ over $R$ is simply denoted by $M\otimes N$ if $R$ is clear from the context. Suppose that $A$ and $B$ are two commutative $R$-algebras with units and that one of them is free as a $R$-module. Then $\iota_A\colon A\rightarrow A\otimes_R B,\,a\mapsto a\otimes 1$ and $\iota_B\colon B\rightarrow A\otimes_R B,\, b\mapsto 1\otimes b$ are injective ring homomorphisms, and we may view $A$ and $B$ as sub-$R$-algebras of $A\otimes_R B$ via $\iota_A$ and $\iota_B$. For $a\in A$ and $b\in B$, we will often write $ab$ or $a\cdot b$ for $a\otimes b$, which will result in no ambiguity. For a $\ZZ$-module $M$ and a $\ZZ$-algebra $K$, We usually denote $M\otimes_\ZZ K$ by $M_K$.

\medskip

\noindent{\bf Acknowledgment} W.L. would like to thank Qing Liu for many helpful discussions on the Chevalley--Weil formula of curves during the cooperation on the paper \cite{LL25}; the idea of defining the ramification module originates from there. He would also like to thank Lie Fu for his interest on the subject and for pointing out the notion of inertia stack, which inspires our definition of strata on a $G$-manifold. This research project has been supported by the NSFC (No.~12571046).

\section{Preliminaries}
\subsection{Constructions around the representation rings}
The material of this subsection is based on \cite{Ser77}, \cite{Seg68} and \cite[Section~1]{AS68II}; see also \cite[Section~2.1]{LL25}.

Let $G$ be a finite group and $k$ a field. The \emph{Grothendieck ring} $R_k(G)$ of $k$-representations of $G$, also called the \emph{representation ring of $G$ over $k$}, is the abelian group of formal finite $\ZZ$-linear combinations $\sum_i n_i V_i$ of $k$-representations $V_i$ of $G$, modulo the relations $V'+V''-V$ for every $G$-equivariant exact sequence $0\rightarrow V'\rightarrow V\rightarrow V''\rightarrow 0$. Since we primarily deal with complex representations, we denote $R_\CC(G)$ by $R(G)$, and a representation without a specified base field means a complex representation.

The class of a representation $V$ of $G$ in $R(G)$ is denoted by $[V]_G$ (or $[V]$ if the group is clear). Elements of $R(G)$ are called \emph{virtual $G$-modules}. The ring structure on $R(G)$ is induced by the tensor product, with $([V],[W])\mapsto [V\otimes W]$, and the multiplicative unit is  $[1_G]$, the class of the one-dimensional trivial representation of $G$. The classes of one-dimensional irreducible representations form a multiplicative subgroup of $R(G)$, and we can talk about the order of an element in this subgroup. To avoid confusion with other tensor products, we denote the multiplication of $\alpha, \beta\in R(G)$ by $\alpha\cdot \beta$ or $\alpha\beta$. For example, if $\alpha=[V]$ and $\beta=[W]$, then $\alpha\cdot \beta = [V\otimes W]$.

The underlying additive abelian group of $R(G)$ is free, with a basis given by the irreducible representations of $G$. Consequently, the natural homomorphisms $R(G)\rightarrow R(G)_\QQ \rightarrow R(G)_\CC$ are injective. A fundamental result is the ring isomorphism $R(G)_\CC\rightarrow C(G)$, which sends $\alpha\in R(G)_\CC$ to its character function $c(\alpha)\colon g\mapsto \Tr(g; \alpha)$, where $C(G)$ is the $\CC$-vector space of class functions on $G$. Since $C(G)$ has a trivial nilradical, so do $R(G)_\CC$ and its subrings $R(G)$ and $R(G)_\QQ$. The inner product of two elements $\alpha,\beta\in R(G)_\CC$ is defined via their characters:
\[
\langle \alpha,\beta\rangle_G:=\langle c(\alpha), c(\beta)\rangle_G:=\frac{1}{|G|}\sum_{g}\Tr(g;\alpha)\cdot \overline{\Tr(g;\beta)}.
\]
The support of $\phi\in R(G)_\CC$ is defined as 
\begin{equation}
    \Supp(\phi):=\{g\in G\mid \Tr(g;\phi)\neq 0\}.
\end{equation}

We recall several standard maps associated with $R(G)$:
\begin{itemize}
    \item For a subgroup $H\subset G$, we have the \emph{restriction map} $\Res^G_H\colon R(G)\rightarrow R(H)$ and the \emph{induction map} $\Ind_H^G\colon R(H)\rightarrow R(G)$.
    \item For $g\in G$, the trace defines a ring homomorphism 
\[
\Tr_g\colon R(G)\rightarrow \CC,\quad \alpha\mapsto \Tr(g; \alpha)
\]
Since $\CC$ is an integral domain, the kernel $\gp_g:=\ker \Tr_g$ is a prime ideal of $R(G)$, and we denote by $R(G)_g:=R(G)_{\gp_g}$ the localization at $\gp_g$. 
For $g=\id$, the prime ideal $I_G:=\gp_{\id}$ is called the \emph{augmentation ideal} of $R(G)$. Note that, $R(G)$ can have zero divisors,\footnote{For example, $\chi_\reg\cdot  I_G =0$, where $\chi_\reg$ is the the class of the regular representation.} 
and the natural map $R(G)\rightarrow R(G)_g$ is in general neither injective nor surjective.
\end{itemize}

\begin{lem}\label{lem: image to localization}
 Suppose that $G=\langle g_0\rangle$ is a nontrivial
 finite cyclic group, and let $\phi\in \hG\setminus\{1_G\}$ be a non-trivial irreducible representation. Let $K=\{g\in G\mid \phi(g)=1\}$. Then, for $\psi\in R(G)$ such that $\Tr(g;\psi)=0$ for all $g\in K$, the element $\frac{|\phi|\psi}{1-\phi}\in R(G)_{g_0}$ is the image of $-\psi\cdot \sum_{d=0}^{|\phi|-1}d\phi^d\in R(G)$ under the localization map \(R(G)\to R(G)_{g_0}\), where $|\phi|$ denotes the order of the character $\phi$.
\end{lem}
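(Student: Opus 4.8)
The plan is to reduce the claimed equality in $R(G)_{g_0}$ to an identity already valid in $R(G)$, by clearing the denominator $1-\phi$, which I first check is invertible after localization. Since $G=\langle g_0\rangle$ and $\phi\in\hG\setminus\{1_G\}$, we have $\phi(g_0)\neq 1$, hence $\Tr_{g_0}(1-\phi)=1-\phi(g_0)\neq 0$, i.e. $1-\phi\notin\gp_{g_0}$; thus $1-\phi$ becomes a unit in $R(G)_{g_0}$. Multiplication by a unit is a bijection of $R(G)_{g_0}$, so the claimed identity is equivalent to
\[
|\phi|\,\psi \;=\; -\,\psi\,(1-\phi)\sum_{d=0}^{|\phi|-1}d\,\phi^d ,
\]
and I will in fact prove this identity already in $R(G)$ (whence it holds a fortiori after applying the localization map).

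The next step is a telescoping computation. Write $n=|\phi|$, so that $\phi^{n}=1_G$ as $n$ is the order of $\phi$ in $\hG$. Shifting the index in $\sum_{d=0}^{n-1}d\,\phi^{d+1}$ and using $\phi^{n}=1_G$ gives
\[
(1-\phi)\sum_{d=0}^{n-1}d\,\phi^d \;=\; \sum_{d=1}^{n-1}\phi^d-(n-1)1_G \;=\;\Big(\sum_{d=0}^{n-1}\phi^d\Big)-n\cdot 1_G .
\]
Let $\Sigma:=\sum_{d=0}^{n-1}\phi^d\in R(G)$; this is the class of the regular representation of $G/K$ inflated along $G\twoheadrightarrow G/K$, where $K=\ker\phi$. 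Substituting, the right-hand side of the identity to be proved becomes $-\psi\bigl(\Sigma-n\cdot 1_G\bigr)=n\,\psi-\psi\,\Sigma$, so everything reduces to showing $\psi\,\Sigma=0$ in $R(G)$.

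For the vanishing $\psi\,\Sigma=0$ I work with characters, using the injection $R(G)\hookrightarrow R(G)_\CC\cong C(G)$. For $g\in G$ one has $\Tr(g;\Sigma)=\sum_{d=0}^{n-1}\phi(g)^d$, which equals $n$ when $g\in K$ (then $\phi(g)=1$) and $0$ when $g\notin K$ (then $\phi(g)$ is an $n$th root of unity different from $1$). By hypothesis $\Tr(g;\psi)=0$ for every $g\in K$. Since $\Tr_g\colon R(G)\to\CC$ is a ring homomorphism, $\Tr(g;\psi\,\Sigma)=\Tr(g;\psi)\,\Tr(g;\Sigma)=0$ for all $g\in G$, hence $\psi\,\Sigma=0$ in $R(G)_\CC$ and therefore in $R(G)$. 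Plugging this back gives $|\phi|\,\psi=-\,\psi\,(1-\phi)\sum_{d=0}^{n-1}d\,\phi^d$ in $R(G)$, and dividing by the unit $1-\phi$ in $R(G)_{g_0}$ yields the assertion. I do not expect a genuine obstacle: the only real content is the observation that the hypothesis on $\psi$ says precisely that $\psi$ is ``supported off $K$'', while $\Sigma$ is ``supported on $K$'', so their product dies; the remainder is bookkeeping in the telescoping sum and the routine check that $1-\phi$ is a unit in $R(G)_{g_0}$.
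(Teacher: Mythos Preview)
Your proof is correct and follows essentially the same approach as the paper: both clear the denominator and verify the identity $\psi\,(1-\phi)\sum_{d=0}^{|\phi|-1}d\,\phi^d=-|\phi|\,\psi$ in $R(G)$ via characters. The paper compresses your telescoping step and the computation of $\Tr(g;\Sigma)$ into a citation of \cite[Lemma~1.2]{Koc05}, whereas you carry it out explicitly; otherwise the arguments are the same.
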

\begin{proof}
Using \cite[Lemma~1.2]{Koc05} and the assumption that $\Tr(g;\psi)=0$ for $g\in K$, a direct calculation shows that
\[
\Tr\left(g;\,\psi\cdot \sum_{d=0}^{|\phi|-1}d\phi^d(1-\phi)\right) = -|\phi|\Tr(g;\psi) ,\quad \text{for all } g\in 
G.
\]
The result follows from this identity.
\end{proof}

\subsection{Recovering a representation from its restrictions to cyclic subgroups}
\label{subsec:artin's theorem}
Let $G$ be a finite group. By Artin's theorem \cite[9.2]{Ser77}, every element $\chi\in R(G)$ is a $\QQ$-linear combination of characters induced from cyclic subgroup of $G$. 
 In this subsection, we provide a concrete expression of this fact in Proposition~\ref{prop: recover chi}.

\begin{defn}\label{defn: theta_H}
For a cyclic group $H$, we define an element $\theta_H\in \frac{1}{|H|}R(H)$ such that (\cite[Section~9.4]{Ser77})\footnote{
{Note that our definition of $\theta_H$ differs from that in \cite[Section~9.4]{Ser77} by a factor of $\frac{1}{|H|}$.}
}:
\begin{equation}
   \Tr(h; \theta_H)=
    \begin{cases}
        1 & \text{if $h$ generates $H$,}\\
        0 & \text{otherwise.}
    \end{cases}
\end{equation}
Thus, $\Tr(\cdot;\theta_H)$ is the characteristic function on the set of generators of $H$. 
We call $\theta_H$ the \emph{characteristic module} supported
on the generators of $H$.
\end{defn}

For any finite group \(G\), it is known by~\cite[Prop. 27]{Ser77} that 
\begin{equation}
\label{eqs:induced-character-funs}
|G|\cdot [1_G]=\sum_{B\subset G} |B|\cdot \Ind^{G}_{B} \theta_B
\end{equation}
where \(B\) runs over all the cyclic subgroups of \(G\).
In particular, for any cyclic group \(H\), we obtain
\begin{equation}
\label{eqs:theta_H-induction}
\theta_H=[1_H]-\sum_{B\subset H} \frac{|B|}{|H|}\cdot \Ind^{H}_{B} \theta_B
\end{equation}
where \(B\) runs over all the proper cyclic subgroups of \(H\). This shows that \(\theta_H\) belongs to \(\frac{1}{|H|}R(H)\). If \(H\) has prime order, then 
\[
\theta_H=[1_H]-\Ind^{H}_{\id} \theta_{\id}=[1_H]-\frac{1}{|H|}[\CC[H]].
\] 
Note that, for a nontrivial cyclic group \(H\), the trace \(\Tr(1; \theta_H)=0\), meaning \(\theta_H\) lies in the augmentation ideal \(I(H)_\QQ\).

Using $\theta_H$, the following proposition recovers a virtual $G$-module from its restriction to the cyclic subgroups of $G$.
\begin{prop}\label{prop: recover chi}
Let $G$ be a finite group, and let $\chi\in R(G)_\QQ$ be a virtual \(G\)-module with rational coefficients.
Then 
\begin{equation}\label{eq: recover chi}
\chi = \sum_{H\subset G} \frac{|H|}{|G|}\Ind_H^G(\theta_H\cdot\Res_H^G \chi) \in R(G)_\QQ
\end{equation}
where the sum runs over all the cyclic subgroups $H$ of \(G\).
\end{prop}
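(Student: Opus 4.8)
The plan is to deduce the identity \eqref{eq: recover chi} from the known relation \eqref{eqs:induced-character-funs}, namely $|G|\cdot[1_G]=\sum_{B\subset G}|B|\cdot\Ind^G_B\theta_B$, by multiplying through by $\chi$ and using the projection formula. First I would recall that for any subgroup $H\subset G$ and any $\beta\in R(H)$, $\alpha\in R(G)$, the projection (Frobenius reciprocity / push-pull) formula gives $\Ind_H^G(\beta\cdot\Res_H^G\alpha)=(\Ind_H^G\beta)\cdot\alpha$ in $R(G)$, and that this extends $\QQ$-linearly to $R(G)_\QQ$ and $R(H)_\QQ$. Then, starting from \eqref{eqs:induced-character-funs}, I multiply both sides by $\frac{1}{|G|}\chi$:
\[
\chi = [1_G]\cdot\chi = \frac{1}{|G|}\sum_{H\subset G}|H|\cdot\bigl(\Ind^G_H\theta_H\bigr)\cdot\chi = \sum_{H\subset G}\frac{|H|}{|G|}\Ind_H^G\bigl(\theta_H\cdot\Res_H^G\chi\bigr),
\]
where the first equality uses that $[1_G]$ is the multiplicative unit of $R(G)_\QQ$, the second substitutes \eqref{eqs:induced-character-funs}, and the third applies the projection formula termwise. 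Since $\theta_H\in\frac1{|H|}R(H)$, each summand lies in $R(G)_\QQ$, so the identity makes sense there.

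Alternatively, if one prefers a self-contained argument not invoking \eqref{eqs:induced-character-funs} directly, I would verify the equality at the level of characters, using the injectivity of $R(G)_\QQ\hookrightarrow R(G)_\CC\cong C(G)$ noted in the preliminaries. For $g\in G$ one computes $\Tr(g;\Ind_H^G(\theta_H\cdot\Res_H^G\chi))$ by the induced-character formula $\Tr(g;\Ind_H^G\psi)=\frac{1}{|H|}\sum_{s\in G,\ s^{-1}gs\in H}\Tr(s^{-1}gs;\psi)$, together with $\Tr(h;\theta_H\cdot\Res_H^G\chi)=\Tr(h;\theta_H)\Tr(h;\chi)$, which by Definition~\ref{defn: theta_H} is $\Tr(h;\chi)$ if $h$ generates $H$ and $0$ otherwise. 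Summing $\frac{|H|}{|G|}$ times this over all cyclic $H\subset G$, the double sum collapses: for each $s$ with $s^{-1}gs$ contained in some cyclic $H$, exactly one such $H$—namely $H=\langle s^{-1}gs\rangle$—makes $s^{-1}gs$ a generator, and every $s\in G$ satisfies $s^{-1}gs\in\langle s^{-1}gs\rangle$. Hence the total is $\frac{1}{|G|}\sum_{s\in G}\Tr(s^{-1}gs;\chi)=\frac{1}{|G|}\sum_{s\in G}\Tr(g;\chi)=\Tr(g;\chi)$, using that trace is a class function. This matches $\Tr(g;\chi)$, and injectivity finishes the proof.

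The only genuine subtlety is bookkeeping in the collapse of the double sum: one must check that the map $s\mapsto\langle s^{-1}gs\rangle$ correctly partitions the conjugates of $g$ landing in cyclic subgroups, so that each contributes exactly once, with the weight $\frac{|H|}{|G|}$ cancelling the $\frac{1}{|H|}$ from the induction formula. Neither route presents a real obstacle; I would likely present the first (deriving it from \eqref{eqs:induced-character-funs} via the projection formula) as the main proof, since it is shortest, and remark that the character computation gives an independent check.
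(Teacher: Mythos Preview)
Your primary proof is correct and is essentially identical to the paper's: both divide \eqref{eqs:induced-character-funs} by $|G|$, multiply by $\chi$, and apply the projection formula $(\Ind_H^G\theta_H)\cdot\chi=\Ind_H^G(\theta_H\cdot\Res_H^G\chi)$ (the paper cites \cite[Section~3.3, Example~5]{Ser77} for this step). Your alternative character computation is a nice independent check but is not needed.
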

\begin{proof}
    Recall formula~\eqref{eqs:induced-character-funs}:
    \[
    [1_G]=\frac{|H|}{|G|}\sum_{H\subset G}\Ind^{G}_{H} \theta_H,
    \]
   Multiplying this equation by $\chi$ yields
    \[
    \chi=\frac{|H|}{|G|}\sum_{\substack{H\subset G\\ \text{cyclic}}}\Ind^{G}_{H} \theta_H\cdot \chi
    =\sum_{\substack{H\subset G\\ \text{cyclic}}}\frac{|H|}{|G|}\Ind^{G}_{H} (\theta_H\cdot \Res^G_H\chi),
    \]
where the second equality follows from \cite[Section~3.3, Example 5]{Ser77}.
\end{proof}

\subsection{\texorpdfstring{$G$}{G}-equivariant \texorpdfstring{$K$}{K}-theory and the holomorphic Lefschetz fixed-point theorem}
Let $G$ be a finite group and \(X\) be a compact complex $G$-manifold, probably disconnected and non-equidimensional.
\footnote{For example, the fixed locus $M^\sigma$ for a finite-order automorphism $\sigma$ of a connected compact complex manifold $M$ can be disconnected and non-equidimensional.}
Let $K_G(X)$ be the Grothendieck ring of $G$-equivariant locally free sheaves (of finite rank) on $X$. If $G$ is trivial, we drop the subscript and write $K(X)$.

Suppose we have a homomorphism of finite groups $\varphi\colon H\rightarrow G$, a compact complex $H$-manifold $Z$, and a map $f\colon Z\rightarrow X$ such that $f(h(z))= \varphi(h)(f(z))$ for all $h\in H,\, z\in Z$. Then pulling back locally free sheaves induces a natural ring homomorphism:
\[
\Res^G_H\colon K_G(X)\rightarrow K_H(Z),\quad [\sE]\mapsto [f^*\sE]
\]
This applies in particular to the inclusion of a subgroup $H\subset G$.

The map \(f: X\to \pt\) to a point induces a pullback ring homomorphism 
\[
f^*\colon R(G)=K_G(\pt)\rightarrow K_G(X),
\]
making $K_G(X)$ into a $R(G)$-algebra. For a locally free $G$-sheaf $\sE$ on $X$, its image under the Gysin map \(f_!: K_G(X)\to K_G(\pt)\cong R(G)\) is the $G$-Euler characteristic of $\sE$:
\[
\chi_G(X,\sE):=f_![\sE] = \sum_{i= 0}^{\dim X}(-1)^i[H^i(X,\sE)]\in R(G).
\]

Now suppose the action of $G$ on $X$ is trivial. Then there is a ring isomorphism (\cite[Proposition~2.2]{Seg68})
\begin{equation}
\label{eqs:G-equiv-K-grp-iso}
\nu: K_G(X)\to K(X)\otimes R(G),\quad
\nu(\sE)=\sum_{M\in \hG} \mathrm{Hom}_{G}(M, \sE)\otimes [M],
\end{equation}
where $\hG$ denotes the set of isomorphism classes of irreducible $G$-modules, and $\mathrm{Hom}_G(M, \sE)$ denotes the sheaf
of $G$-equivariant homomorphisms.  
Via this isomorphism, we have a ring homomorphism (\cite[1.5.4]{Ill68})
\begin{equation}
\label{eq:chern-character-hom}
\ch_G \colon K_G(X)\cong K(X)\otimes R(G)\to H^{\even}(X, \ZZ)\otimes R(G),\quad u\otimes\phi \mapsto \ch(u)\phi
\end{equation}
where $H^{\even}(X, \ZZ)$ denotes the even-degree part of cohomology ring $H^*(X, \ZZ)$ and \(\ch\colon K(X)\rightarrow H^{\even}(X,\ZZ)\) is the usual Chern character. For a given \(g\in G\), 
taking the trace of $g$ yields a ring homomorphism
\[
\Tr_g\colon K(X)\otimes R(G)\to H^\even(X,\CC), \quad u\otimes \phi\mapsto \Tr(g;\phi)\ch(u)  
\]
There is also an integration map:
\[
\int_X\colon H^{\even}(X, \ZZ)\otimes R(G) \rightarrow R(G),\quad u\otimes \phi\mapsto \left(\int_X u\right)\phi.
\]

Next we state the compatibility of the maps introduced above.
\begin{lem}\label{lem: compatibility}
Let $G$ be a finite group acting trivially on a compact complex manifold $X$, which may be disconnected and non-equidimensional, and let $H\subset G$ be a subgroup. Then the following  diagrams commute:
\begin{enumerate}
    \item \[
    \begin{tikzcd}
        K_G(X)\ar[d, "f_!"'] \ar[r, "\nu"] & K(X)\otimes R(G) \ar[d, "f_!\otimes \id"]\\
        K_G(\pt)\ar[r, "\nu"] & K(\pt)\otimes R(G). 
    \end{tikzcd}
    \]
    \item \[
    \begin{tikzcd}
        K_G(X)\ar[r, "\nu"] \ar[d, "\mathrm{Res}^G_H"'] & K(X)\otimes R(G) \ar[d, "\id\otimes \mathrm{Res}^G_H"] \\
        K_H(X)\ar[r, "\nu"] & K(X)\otimes R(H) 
    \end{tikzcd}
    \]
    \item 
    \[
    \begin{tikzcd}
   K_G(X)\arrow[r, "\ch_G"] \arrow[d, "\Res^G_H"']&  H^\even(X,\ZZ)\otimes R(G)\arrow[d, "\id\otimes \Res^G_H"]\\
   K_H(X)\arrow[r, "\ch_H"] & H^\even(X,\ZZ)\otimes R(H)
    \end{tikzcd}
    \]
\end{enumerate}
\end{lem}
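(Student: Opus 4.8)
The plan is to verify each of the three commutative diagrams separately, using in each case the explicit formula for the map $\nu$ from \eqref{eqs:G-equiv-K-grp-iso} together with standard functoriality properties. Since all the groups appearing ($K_G(X)$, $K(X)\otimes R(G)$, etc.) are generated additively by classes of $G$-equivariant locally free sheaves, it suffices to check commutativity on a single class $[\sE]$; and by linearity in the representation-theoretic variable, one may often further reduce to the case where $\sE = u\boxtimes M$ is externally tensored, i.e. $\nu(\sE)$ has a single term, although one must be a little careful because not every $G$-sheaf splits this way. The cleaner route is to keep the full sum $\nu(\sE)=\sum_{M\in\hG}\Hom_G(M,\sE)\otimes[M]$ and track what each of the four maps in a given square does to it.

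For (i): going right-then-down sends $[\sE]$ to $\sum_M \bigl(f_!\,\Hom_G(M,\sE)\bigr)\otimes[M] = \sum_M \chi(X,\Hom_G(M,\sE))\,[M]$, since $f_!$ on the trivial-group side is ordinary Euler characteristic. Going down-then-right sends $[\sE]$ first to $\chi_G(X,\sE)=\sum_j(-1)^j[H^j(X,\sE)]\in R(G)$, and then $\nu$ on a point is the identification $K_G(\pt)=R(G)\xrightarrow{\sim} R(G)$ that records multiplicities of irreducibles, so it becomes $\sum_M \bigl(\sum_j(-1)^j\dim\Hom_G(M,H^j(X,\sE))\bigr)[M]$. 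The two agree because cohomology commutes with the exact functor $\Hom_G(M,-)$ on the category of $\sO_X$-modules with trivial $G$-action (equivalently, $H^j(X,\sE)$ decomposes $G$-equivariantly and $\Hom_G(M,-)$ just extracts the $M$-isotypic multiplicity), giving $H^j(X,\Hom_G(M,\sE))\cong \Hom_G(M,H^j(X,\sE))$. So this square reduces to the identity $\chi(X,\Hom_G(M,\sE)) = \langle$ multiplicity of $M$ in $\chi_G(X,\sE)\rangle$, which is essentially the definition of $\nu$ on a point.

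For (ii): the map $\Res^G_H$ on $K_G(X)$ forgets the $G$-action to an $H$-action, and on $K(X)\otimes R(G)$ it is $\id\otimes\Res^G_H$. Starting from $[\sE]$, going down then right gives $\nu_H(\Res^G_H[\sE]) = \sum_{N\in\hat H}\Hom_H(N,\sE)\otimes[N]$. Going right then down gives $\sum_{M\in\hG}\Hom_G(M,\sE)\otimes\Res^G_H[M] = \sum_{M\in\hG}\Hom_G(M,\sE)\otimes\sum_{N\in\hat H}\langle N,\Res^G_H M\rangle[N]$. Equating coefficients of $[N]$, one needs the sheaf isomorphism $\Hom_H(N,\sE)\cong \bigoplus_{M\in\hG}\Hom_G(M,\sE)^{\oplus\langle N,\Res^G_H M\rangle}$; this is just the decomposition of the underlying $H$-sheaf of $\sE$ into $H$-isotypic pieces, using that each $G$-isotypic piece $\Hom_G(M,\sE)\otimes M$ restricts to $\Hom_G(M,\sE)\otimes(\Res^G_H M)$ and then further decomposes over $\hat H$ with the stated multiplicities — a purely representation-theoretic bookkeeping (Frobenius-type) identity applied fiberwise.

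For (iii): this is the most straightforward once the others are in hand, since $\ch_G$ is defined in \eqref{eq:chern-character-hom} precisely via the composite $K_G(X)\xrightarrow{\nu}K(X)\otimes R(G)\xrightarrow{\ch\otimes\id}H^{\even}(X,\ZZ)\otimes R(G)$. Thus diagram (iii) factors through diagram (ii): stack (ii) on top of the commuting square whose vertical maps are $\ch\otimes\id$ on both $K(X)\otimes R(G)$ and $K(X)\otimes R(H)$ and whose horizontal maps are $\id\otimes\Res^G_H$ — that square commutes trivially since $\ch$ acts only on the $K(X)$-factor and $\Res^G_H$ only on the representation factor, so they are independent coordinate-wise operations. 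Pasting the two commuting squares vertically yields (iii). I would organize the write-up so that (ii) is proved first, then (iii) is deduced by this pasting, and (i) is handled on its own via the isotypic-decomposition-of-cohomology argument.

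I expect the main obstacle to be purely expository rather than mathematical: making precise, without drowning in notation, the claim that the functors $\Hom_G(M,-)$ and $H^j(X,-)$ commute on $G$-sheaves with trivial $G$-action (needed for (i)), and the compatible fiberwise isotypic decompositions (needed for (ii)). The cleanest way to handle both uniformly is to invoke Maschke's theorem to write the (locally free, finite-rank) $G$-sheaf $\sE$ as a direct summand pattern $\sE \cong \bigoplus_{M\in\hG} \Hom_G(M,\sE)\otimes_\CC M$ of $\sO_X$-modules with $G$ acting only on the $M$-factor — this is exactly the content of Segal's isomorphism $\nu$ upgraded from $K$-theory to an honest sheaf decomposition — and then all three diagrams become immediate consequences of the exactness and additivity of the functors involved. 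I would therefore begin the proof by recording this sheaf-level decomposition as the single input, and then dispatch (i), (ii), (iii) each in one or two lines.
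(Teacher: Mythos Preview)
Your proposal is correct and follows essentially the same route as the paper: for (i) you compare the two paths via the commutation of $\Hom_G(M,-)$ with cohomology, for (ii) you reduce to a fiberwise Frobenius-reciprocity identity, and for (iii) you paste (ii) onto the trivially commuting square for $\ch\otimes\id$. The paper does exactly this, with the one difference that in (ii) it makes the Frobenius step fully explicit via the adjunction $\Hom_H(N,\Res^G_H\sE)\cong\Hom_G(\Ind^G_H N,\sE)$ rather than leaving it as ``bookkeeping''; your suggested unifying device of recording the sheaf-level decomposition $\sE\cong\bigoplus_M\Hom_G(M,\sE)\otimes_\CC M$ up front is a clean way to package the same argument.
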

\begin{proof}
 (i) By definition of $f_!$, we have
\[
f_!(\sE) = \chi_G(X, \sE) = \sum_{[M]\in \hG}\chi(X, \mathrm{Hom}_G(M, \sE))\otimes [M] =\sum_{[M]\in \hG} f_!(\mathrm{Hom}_G(M, \sE))\otimes [M]  
\]

(ii)   We need to prove the identity
    \begin{equation}
    \label{eqs:frobenius-reciprocity}
    \sum_{N\in \hH} \mathrm{Hom}_H(N, \mathrm{Res}^G_H\sE)\otimes N=\sum_{M\in \hG} \mathrm{Hom}_G(M, \sE)\otimes \mathrm{Res}^G_HM.
    \end{equation}
where \(M\) (resp. \(N\)) runs over irreducible \(G\)-modules (resp.~\(H\)-modules). 

Note that
\[
\mathrm{Res}^G_HM=\sum_{N\in \hH} \mu_{MN}N, \quad \text{where }\mu_{MN}=\langle \mathrm{Res}^G_HM, N\rangle_H.
\]
Thus, the right-hand side of~\eqref{eqs:frobenius-reciprocity} equals
\[
\sum_{N\in \hH} \mathrm{Hom}_G(\sum_{M\in \hG} \mu_{MN}M, \sE)\otimes N.
\]
The Frobenius reciprocity
\[
\langle \mathrm{Res}^G_HM, N\rangle_H=\langle M, \mathrm{Ind}^G_HN\rangle_G
\]
implies that
\[
\mathrm{Ind}^G_HN=\sum_{M\in \hG} \mu_{MN} M.
\]
Hence
\[
\sum_{N\in \hH} \mathrm{Hom}_G(\sum_{M\in \hG} \mu_{MN}M, \sE)\otimes N=\sum_{N\in \hH} \mathrm{Hom}_G(\mathrm{Ind}^G_HN, \sE)\otimes N.
\]
This is equal to the left-hand side of \eqref{eqs:frobenius-reciprocity} due to the adjunction isomorphism:
\[
\mathrm{Hom}_H(N, \mathrm{Res}^G_H\sE)=\mathrm{Hom}_G(\mathrm{Ind}^G_HN, \sE).
\]

(iii) For any \(\epsilon\in K_G(X)\), write
\[
\nu(\epsilon) = \sum_i u_i\otimes\phi_i, \quad u_i\in K(X),\, \phi_i\in R(G).
\]
By part (ii), we have
    \[
    \left(\id\otimes \Res^G_H\right)\circ \nu(\epsilon)=\nu(\Res^G_H \epsilon).
    \]
    It follows that
    \begin{multline*}
    \ch_H(\Res^G_H \epsilon)= 
    \ch_H((\id\otimes \Res^G_H)\circ \nu(\epsilon)) =\ch_H\left( \sum_i u_i\otimes \Res^G_H\phi_i\right) \\
    = \sum_i\ch(u_i)\otimes \Res^G_H\phi_i
    = \Res^G_H\ch_G(\epsilon).
    \end{multline*}
\end{proof}
 
The main result of this paper is based on the following famous theorem of Atiyah and Singer.
\begin{thm}[Holomorphic Lefschetz fixed-point formula, {\cite[Theorem 4.6]{AS68III}}]\label{thm: AS}
Let $G$ be a finite group acting on a compact complex manifold $X$, probably disconnected and non-equidimensional, and let \(\sE\) be a locally free $G$-sheaf over \(X\). Let \(X^g\) be the fixed locus of an element \(g\in G\), and let \(N^g=N_{X^g/X}\) the normal bundle of \(X^g\) in \(X\). Then  the following holds:
\begin{equation}\label{eq: AS}
\Tr(g;{\chi_G(X, \sE)}) = \Tr\left(g;\int_{X^g}\frac{\operatorname{ch}_{\langle g\rangle}(\sE|_{X^g})\cdot \td(X^g)}{\ch_{\langle g\rangle}(\lambda_{-1}(N^g)^*) }\right),
\end{equation}
where $(N^g)^*$ denotes the dual of $N^g$.   
\end{thm}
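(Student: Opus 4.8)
The plan is to deduce this from the classical Atiyah--Singer holomorphic Lefschetz fixed-point theorem \cite[Theorem 4.6]{AS68III}, which treats a single finite-order automorphism of a \emph{connected} compact complex manifold, by a sequence of elementary reductions together with a dictionary between the $G$-equivariant $K$-theory packaging used here and the eigenbundle packaging of \emph{loc.\ cit.}

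\emph{Reduction to $H=\langle g\rangle$ and connected $X$.} The left-hand side $\Tr(g;\chi_G(X,\sE))=\sum_i(-1)^i\Tr(g;H^i(X,\sE))$ depends only on $g$ and on $\sE$ regarded as an $H$-sheaf, and the right-hand side visibly involves only $H$, $X^g$ and $N^g$; so $G$ may be replaced by $H=\langle g\rangle$. Both sides are additive over the $H$-orbits of the (finitely many) connected components of $X$. On an orbit with more than one component, $g$ fixes no point, so its contribution to $X^g$ is empty, while its contribution to cohomology is an $H$-module induced from a proper subgroup $K\subsetneq H$, whose character vanishes at the generator $g$ of the cyclic group $H$; hence both sides vanish there. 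We are thus reduced to $X$ connected and $g$-invariant, and, discarding the trivial case, to $X^g\neq\emptyset$.

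\emph{Translating the right-hand side.} Since $H$ acts trivially on $X^g$, the eigenbundle decomposition of $H$-equivariant locally free sheaves (equivalently Segal's isomorphism $K_H(X^g)\cong K(X^g)\otimes R(H)$) writes $\sE|_{X^g}=\bigoplus_\zeta\sE_\zeta$ with $g$ acting on $\sE_\zeta$ by the scalar $\zeta$, so that $\Tr_g\,\ch_H(\sE|_{X^g})=\sum_\zeta\zeta\,\ch(\sE_\zeta)$ is the numerator appearing in \cite{AS68III}. Similarly $(N^g)^*=\bigoplus_{\zeta\neq1}N^*_\zeta$, with no eigenvalue $1$ because $X^g$ is precisely the fixed locus of $g$; multiplicativity of $\lambda_{-1}$ and the splitting principle turn $\Tr_g\,\ch_H(\lambda_{-1}(N^g)^*)$ into the standard denominator of \emph{loc.\ cit.}, whose degree-$0$ part is $\Tr(g;\lambda_{-1}(N^g)^*)$. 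As already noted in the introduction this is nonzero, hence a unit in $R(H)_g$, so $\ch_H(\lambda_{-1}(N^g)^*)$ is a unit in $H^{\even}(X^g,\ZZ)\otimes R(H)_g$ (its higher-degree parts being nilpotent), and the quotient in \eqref{eq: AS} makes sense there. Applying $\Tr_g$ and $\int_{X^g}$ to it reproduces exactly the local fixed-point term of \cite[Theorem 4.6]{AS68III}.

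\emph{Conclusion and main difficulty.} Summing the local terms over the (possibly disconnected, non-equidimensional) fixed locus $X^g$, \cite[Theorem 4.6]{AS68III} identifies the total with $\sum_i(-1)^i\Tr(g;H^i(X,\sE))=\Tr(g;\chi_G(X,\sE))$, which is the left-hand side of \eqref{eq: AS}. The only genuinely delicate point is the bookkeeping in the middle step: aligning our conventions for dualization, the sign in $\lambda_{-1}$, and the homomorphism $\Tr_g$ with those of \cite{AS68III}, and verifying that the quotient is formed in the correct localization $H^{\even}(X^g,\ZZ)\otimes R(H)_g$ so that it is legitimate before integrating. Everything else is formal.
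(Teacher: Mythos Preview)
The paper does not prove this theorem at all: it is stated as a quotation of \cite[Theorem 4.6]{AS68III} and used as a black box, with no argument given. Your proposal is therefore not competing with any proof in the paper; rather, you are supplying the (correct) routine verification that the formulation \eqref{eq: AS} in the paper's $K_H$-theoretic packaging agrees with the original statement in \cite{AS68III}. Your reductions are sound---in particular, the vanishing of both sides on an $H$-orbit of components of size $>1$ is justified exactly as you say, since for cyclic $H$ the character of $\Ind_K^H V$ vanishes at any generator of $H$ whenever $K\subsetneq H$---and the dictionary via Segal's isomorphism $K_H(X^g)\cong K(X^g)\otimes R(H)$ is precisely how the paper itself interprets $\ch_H$ and the denominator $\ch_H(\lambda_{-1}(N^g)^*)$ in the surrounding text. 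So there is nothing to correct; if anything, your write-up could serve as a useful remark explaining why the cited theorem really does yield the displayed identity in this notation.
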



\section{The Chevalley--Weil formula for compact complex manifolds}
\begin{nota}\label{nota: CW}
 Let $X$ be a compact complex manifold, possibly disconnected and non-equidimensional, and let $G$ be a finite group acting on $X$ via a homomorphism $\rho\colon G\rightarrow \Aut(X)$, which is not necessarily faithful. For brevity, we denote $g(x):=\rho(g)(x)$ for $g\in G$ and $x\in X$. Let $\sE$ be a locally free $G$-sheaf on $X$. Note that $\ker \rho$ might act non-trivially on $\sE$.   
\end{nota}
\begin{defn}\label{defn: strata}
A component of $X^g$ for some $g\in G$ is called a \emph{stratum} of the pair $(X, G)$.
\end{defn}
Let $\sZ$ be the set of all strata.
For two strata $Z_1, Z_2\in \mathcal{Z}$, if $Z_1\subsetneq Z_2$, their stabilizers satisfy $G_{Z_1}\supsetneq G_{Z_2}$. For each stratum $Z\in \sZ$, define
\[
\sH_Z:=\{H\subset G_Z \text{ cyclic}\mid Z \text{ is a component of } X^H\}
\]
It is clear that $\sH_Z$ is nonempty and its elements are subgroups of $G_Z$.
Let $\sH$ be the set of all cyclic subgroups of $G$. For each $H\in \sH$,
define
\[
\sZ_H:=\{Z\in \sZ\mid \text{$Z$ is a component of $X^H$}\}.
\]
The set $\sZ_H$ is empty if and only if $X^H$ is empty.

Now for $Z\in \sZ$ and a cyclic subgroup $H\subset G_Z$, the sheaf $\sE|_{Z}$ is an $H$-equivariant sheaf on $Z$, and we may decompose $\sE|_{Z}$ into eigensheaves with respect to the action of $H$:
\[
\sE|_{Z} = \bigoplus_{\phi\in \hH} \sE_{Z,H, \phi}
\]
where $\hH$ denotes the set of irreducible representations of $H$, and for $\phi\in \hH$, $\sE_{Z,H, \phi}$ is the eigen-subsheaf of $\sE|_{Z}$ with character $\phi$. 
\begin{defn}\label{defn: H-Chern}
The $H$-Chern character of $\sE|_Z$ is defined as: 
\begin{equation}\label{eq: ch_H E|Z}
\ch_H(\sE|_Z)
:=\sum_{\phi\in\hH} \ch(\sE_{Z,H, \phi})\otimes[\phi]\in H^{\even}(Z,\ZZ)\otimes R(H).  
\end{equation}
\end{defn} 
Let us recall the description of  \(\ch_H \lambda_{-1}N_{Z/X}^*\) given in~\cite[Section~3]{AS68III}. Consider the eigen decomposition of the conormal bundle \(N^*_{Z/X}\):
\begin{equation}
\label{eq:eigen-decomposition}
N_{Z/X}^*=\bigoplus_{\phi\in \hH} N_{Z,H,\phi}^*.
\end{equation}
Let $m_{\phi}$ be the rank of the subbundle $N_{Z,H,\phi}^*$, and let $\{x_{k_j}, 1\leq j\leq m_{\phi}\}$ be the Chern roots of $N_{Z,H,\phi}^*$. Then we have
\begin{equation}\label{eq: ch lambda_-1}
\ch_H\lambda_{-1}(N_{Z/X}^*) := \prod_{\phi\in \hH}\prod_{j=1}^{m_\phi} (1 -\phi e^{x_{k_j}}).
\end{equation}
For notational convenience, we may write this as
\[
\prod_{j=1}^{m} (1 -\phi_j e^{x_j}) 
\]
where \(\{x_j, 1\leq j\leq m\}\) are the Chern roots of \(N_{Z/X}^*\) with rank \(m\), and \(\{\phi_j, 1\leq j\leq m\}\) is the set of (possibly repeated) eigen characters associated to \(N_{Z/X}^*\) 
It is known that $\ch_H \lambda_{-1}(N_{Z/X}^*)$ becomes a unit in $H^{\even}(Z,\ZZ)\otimes R(H)_{h}$ by \cite[Lemma~2.7]{AS68II}, where $h$ is a generator of $H$.
We show that there exists a "partial inverse" $\tau_{Z,H}$ of $\ch_H\lambda_{-1}(N_{Z/X}^*)$ in $H^{\even}(Z,\QQ)\otimes R(H)_{\QQ}$ in Lemma~\ref{lem:quasi-inverse-class}.

For a stratum $Z$ and a cyclic group $H\in \sH_Z$, define the subset $K_{Z,H}\subset H$ by:
\begin{equation}\label{eq: K}
K_{Z,H}=
\begin{cases}
\{h\in H\mid \Tr(h;\phi_j)=1 \text{ for some $1\leq j\leq m$}\} & \text{if $Z$ is  not a component of $X$}\\
\{\id\} & \text{if $Z$ is a component of $X$}
\end{cases}
\end{equation}
When $Z$ is not a component of $X$, the set $K_{Z,H}$ consists exactly of those $h\in H$ for which $Z$ is not a component of $X^h$.
Define
\begin{equation}\label{eq: theta_ZH}
  \theta_{Z, H} =  [1_H] - \sum_{B\subset K_{Z,H}}\frac{|B|}{|H|} \Ind_B^H \theta_B\in R(H)_\QQ  
\end{equation}
where the summation ranges over cyclic groups $B$ contained in $K_{Z,H}$. Similar to $\theta_H$ in Definition~\ref{defn: theta_H}, \(\theta_{Z,H}\) is a characteristic module, whose support is the subset \(H\setminus K_{Z,H}\). In fact, for any \(h\in H\setminus K_{Z,H}\) and \(B\subset K_{Z,H}\), we have
\[
\theta_B(shs^{-1})=\theta_B(h)=0,\quad\forall\, s\in G.
\]
For any \(h\in K_{Z,H}\) with \(\phi_j(h)=1\) for some \(j\), the cyclic subgroup \(\langle h\rangle\) is contained in \(K_{Z,H}\) since \(\phi_j(h^k)=1\) for all \(k\). Then \(\theta_B(h)=1\) if and only if \(\langle h\rangle=B\). Hence, we have
\begin{equation}
\label{eqs:theta_ZH}
\Tr(h; \theta_{Z,H})=
\begin{cases}
    1 & \text{if $h\in H\setminus K_{Z,H}$;}\\
    0 & \text{if $h\in K_{Z,H}$.}
\end{cases}    
\end{equation}

\begin{rmk}
\begin{enumerate}
    \item For any $\phi\in R(H)$ with $\Supp(\phi)\subset \Supp(\theta_{Z,H})$ , one has $\phi\cdot \theta_{Z,H}=\phi$.
In particular, $\theta_{Z,H}^2=\theta_{Z,H}$, meaning $\theta_{Z,H}$ is idempotent. Similarly,  if $H$ is nontrivial, then $\theta_H\cdot\theta_{Z,H}=\theta_H$.
\item $\theta_{Z,H}$ is contained in the augmentation ideal $I_{H,\QQ}$.
\end{enumerate}
\end{rmk}

\begin{lem}
\label{lem:quasi-inverse-class}
Let $Z$ be a stratum of $(X,G)$ and $H\in \sH_Z$, with a generator $h_0$. Then there exists a unique element \(\tau_{Z,H}\in H^{\even}(Z,\QQ)\otimes R(H)\) that maps to $\theta_{Z,H}\ch_H\lambda_{-1}(N_{Z/X}^*)^{-1}$ under the localization map $H^{\even}(Z,\QQ)\otimes R(H)\rightarrow H^{\even}(Z,\QQ)\otimes R(H)_{h_0}$, and that $\Tr(h; \tau_{Z,H})=0$ for any $h\in K_{Z,H}$.
\end{lem}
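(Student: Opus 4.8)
The substance of the statement is the explicit construction of $\tau_{Z,H}$. Write $n=|H|$, and let $\{x_j\}_{1\le j\le m}$ and $\{\phi_j\}_{1\le j\le m}$ be the Chern roots and the (possibly repeated) eigen characters of $N_{Z/X}^*$, so that $\ch_H\lambda_{-1}(N_{Z/X}^*)=\prod_{j=1}^m(1-\phi_j e^{x_j})$ by \eqref{eq: ch lambda_-1}. Since $Z$ is a component of $X^H=X^{h_0}$, the normal bundle $N_{Z/X}$ has no $H$-invariant part, so no $\phi_j$ equals $1_H$; as $h_0$ generates $H$, this forces $\phi_j(h_0)\ne 1$ for every $j$. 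I would split
\[
\ch_H\lambda_{-1}(N_{Z/X}^*)=\Lambda_0+\Lambda_+,\qquad \Lambda_0:=\prod_{j=1}^m(1-\phi_j)\in R(H),
\]
where $\Lambda_+$ lies in $\bigoplus_{i>0}H^{2i}(Z,\QQ)\otimes R(H)$, a nilpotent ideal of $H^{\even}(Z,\QQ)\otimes R(H)$ because $Z$ is compact. As $\Tr(h_0;\Lambda_0)=\prod_j(1-\phi_j(h_0))\ne 0$, the element $\Lambda_0$ is a unit in $R(H)_{h_0}$, so $\ch_H\lambda_{-1}(N_{Z/X}^*)$ is a unit in $H^{\even}(Z,\QQ)\otimes R(H)_{h_0}$ with inverse the finite sum $\sum_{k\ge 0}(-1)^k\Lambda_0^{-(k+1)}\Lambda_+^{k}$.

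\textbf{A partial inverse of $\Lambda_0$ over $R(H)_\QQ$, and the definition of $\tau_{Z,H}$.} Because $R(H)_\QQ$ is reduced and finite over $\QQ$ it is a product of fields, and as $\theta_{Z,H}$ is idempotent, $A:=\theta_{Z,H}\,R(H)_\QQ$ (with unit $\theta_{Z,H}$) is again a product of fields. For $h\in\Supp(\theta_{Z,H})=H\setminus K_{Z,H}$ one has $\phi_j(h)\ne 1$ for all $j$ by \eqref{eq: K}, hence $\Tr(h;\Lambda_0)\ne 0$; since this character detects every field factor of $A$, the element $\theta_{Z,H}\Lambda_0$ is a unit in $A$. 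Let $\Lambda_0'\in R(H)_\QQ$ be its inverse in $A$; then $\theta_{Z,H}\Lambda_0\Lambda_0'=\theta_{Z,H}$, $\Supp(\Lambda_0')\subseteq\Supp(\theta_{Z,H})$, and, since $\Tr(h_0;\theta_{Z,H}-1_H)=0$ forces $\mathrm{loc}_{h_0}(\theta_{Z,H})=1$, one gets $\mathrm{loc}_{h_0}(\Lambda_0')=\Lambda_0^{-1}$ in $R(H)_{h_0}$. Now set
\[
\tau_{Z,H}:=\sum_{k\ge 0}(-1)^k(\Lambda_0')^{k+1}\Lambda_+^{k}\ \in\ H^{\even}(Z,\QQ)\otimes R(H),
\]
a finite sum. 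Localizing at $h_0$ term by term gives $\mathrm{loc}_{h_0}(\tau_{Z,H})=\sum_{k\ge 0}(-1)^k\Lambda_0^{-(k+1)}\Lambda_+^{k}=\ch_H\lambda_{-1}(N_{Z/X}^*)^{-1}=\theta_{Z,H}\ch_H\lambda_{-1}(N_{Z/X}^*)^{-1}$, the last equality because $\mathrm{loc}_{h_0}(\theta_{Z,H})=1$; this is the first required property. For the second, if $h\in K_{Z,H}$ then $\Tr(h;\Lambda_0')=0$ since $\Supp(\Lambda_0')\cap K_{Z,H}=\varnothing$, and as $\Tr_h\colon H^{\even}(Z,\QQ)\otimes R(H)\to H^{\even}(Z,\CC)$ is a ring homomorphism, every summand of $\tau_{Z,H}$ has vanishing $h$-trace, so $\Tr(h;\tau_{Z,H})=0$.

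\textbf{Uniqueness.} If $\tau,\tau'$ both satisfy the conclusion, put $\delta=\tau-\tau'$. Since $H^{\even}(Z,\QQ)$ is flat over $\ZZ$ and $\gp_{h_0}$ is a minimal prime of $R(H)$ with $R(H)_{h_0}=\mathrm{Frac}(R(H)/\gp_{h_0})$, one has $\ker(\mathrm{loc}_{h_0})=H^{\even}(Z,\QQ)\otimes\gp_{h_0}$; moreover $\gp_{h'}=\gp_{h_0}$ exactly for the generators $h'$ of $H$, so $\mathrm{loc}_{h_0}(\delta)=0$ forces $\Tr(h';\delta)=0$ for every generator $h'$ of $H$. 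The trace hypothesis gives $\Tr(h;\delta)=0$ for $h\in K_{Z,H}$. Since $H=K_{Z,H}\cup\{\text{generators of }H\}$, the element $\delta$ has vanishing character on all of $H$, and as $H^{\even}(Z,\QQ)\otimes R(H)_\QQ\hookrightarrow\prod_{h\in H}H^{\even}(Z,\CC)$ via the $\Tr_h$, we conclude $\delta=0$.

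\textbf{Main obstacle.} Everything above is a formal computation in $H^{\even}(Z,\QQ)\otimes R(H)_\QQ$ once the nilpotent part $\Lambda_+$ has been separated from the $h_0$-invertible part $\Lambda_0$. The only genuinely geometric ingredient is the identity $H=K_{Z,H}\cup\{\text{generators of }H\}$ invoked in the uniqueness step, equivalently that every proper cyclic subgroup $B\subsetneq H$ acts on $N_{Z/X}$ with a nonzero fixed subbundle (so that $Z$ is not a component of $X^{B}$). This is the point I expect to require the most care: it is immediate when $|H|$ is prime, and in general I would derive it from the local linearization of the $H$-action transverse to $Z$ together with $H\in\sH_Z$; should it fail, the element $\tau_{Z,H}$ is still determined modulo the ideal annihilated by $\theta_H$, which suffices for the later definition of $\Gamma(\sE)_Z$.
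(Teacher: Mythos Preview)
Your construction is correct and yields the same element as the paper's, though by a different route: the paper inverts $\ch_H\lambda_{-1}(N^*_{Z/X})$ factor by factor, replacing each $\dfrac{\theta_{Z,H}}{1-\phi_j}$ by its explicit preimage $-\dfrac{1}{|\phi_j|}\,\theta_{Z,H}\sum_{d}d\phi_j^{d}\in R(H)_\QQ$ via Lemma~\ref{lem: image to localization}, whereas you invert the degree-zero part $\Lambda_0$ globally inside the idempotent subring $\theta_{Z,H}R(H)_\QQ$ and then run a finite geometric series in the nilpotent $\Lambda_+$. Your approach is a bit slicker and avoids the auxiliary Lemma~\ref{lem: image to localization}; the paper's is more explicit and feeds directly into later computations such as Lemma~\ref{lem: Gamma_Z curve}.

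Your uniqueness argument, however, has a genuine gap at exactly the point you flagged. The identity $H=K_{Z,H}\cup\{\text{generators of }H\}$ is \emph{false} in general: take $H\cong\ZZ/4\ZZ$ acting on a rank-one normal bundle via a faithful character $\phi$. Then $K_{Z,H}=\ker\phi=\{\id\}$, yet the order-$2$ element of $H$ is neither in $K_{Z,H}$ nor a generator; geometrically, $Z$ is also a component of $X^B$ for the order-$2$ subgroup $B$, so $B\in\sH_Z$, and nothing forbids this. In that example the two conditions stated in the lemma leave the $\Tr_2$-component of $\tau$ unconstrained, so your route to uniqueness cannot be repaired along those lines.

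The paper does not use that identity. Instead it records that for \emph{every} $h\in H\setminus K_{Z,H}$ (not just generators) one has $\Tr(h;1-\phi_j)\neq 0$ for all $j$, so $\Tr(h;\ch_H\lambda_{-1}(N^*_{Z/X}))$ is a unit in $H^{\even}(Z,\QQ)$ and the constructed element satisfies $\Tr(h;\tau_{Z,H})=\Tr\bigl(h;\ch_H\lambda_{-1}(N^*_{Z/X})\bigr)^{-1}$; together with the vanishing on $K_{Z,H}$ this pins down all traces. Your construction already has this property: from $\theta_{Z,H}\Lambda_0\Lambda_0'=\theta_{Z,H}$ and $\Tr(h;\theta_{Z,H})=1$ one gets $\Tr(h;\Lambda_0')=\Tr(h;\Lambda_0)^{-1}$ for $h\in H\setminus K_{Z,H}$, hence $\Tr(h;\tau_{Z,H})=\Tr(h;\Lambda_0+\Lambda_+)^{-1}$. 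So drop the geometric claim and argue uniqueness via these traces (equivalently, via the equation $\tau_{Z,H}\cdot\ch_H\lambda_{-1}(N^*_{Z/X})=\theta_{Z,H}$ in $H^{\even}(Z,\QQ)\otimes R(H)_\QQ$ together with $\Supp(\tau_{Z,H})\subset H\setminus K_{Z,H}$, which is exactly the characterization invoked later in the proof of Lemma~\ref{lem: ch_H td_H}(ii)). Your fallback remark---that for the definition of $\Gamma(\sE)_Z$ only $\theta_H\cdot\tau_{Z,H}$ matters---is correct but does not suffice for Lemma~\ref{lem: ch_H td_H}(ii), where the full trace profile on $H\setminus K_{Z,H}$ is used.
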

\begin{proof}
Using \eqref{eq: ch lambda_-1}, we may write 
\begin{equation}\label{eq: tau_ZH}
\theta_{Z,H}\ch_H\lambda_{-1}(N_{Z/X}^*)^{-1} = \theta_{Z,H}\prod_{j=1}^{m} (1 -\phi_j e^{x_j}) = \prod_{j=1}^m\frac{\theta_{Z,H}}{1-\phi_j} \left(1-\frac{\theta_{Z,H}\phi_j}{1-\phi_j}\sum_{k\geq 1}\frac{x_j^k}{k!}\right)^{-1}    
\end{equation}
Expanding $\sU(s,t):=\left(1-s\sum_{k\geq1}\frac{t^k}{k!}\right)^{-1}$ as a formal series in $t$, we obtain 
\[
\sU(s,t)=\sum_{k\geq 0}A_k(s) t^k\in \QQ[s][[t]]
\]
where $A_k(s)\in\QQ[s]$ is a polynomial with rational coefficients for each $k$. Then
 \[
 \left(1-\frac{\theta_{Z,H}\phi_j}{1-\phi_j}\sum_{k\geq 1}\frac{x_j^k}{k!}\right)^{-1}=\sum_{k\geq 0}A_k\left(\frac{\theta_{Z,H}\phi_j}{1-\phi_j}\right) x_j^k
 \]
 and by \eqref{eq: tau_ZH}, we have
 \[
 \theta_{Z,H}\left(\ch_H\lambda_{-1}(N_{Z/X}^*)\right)^{-1}=\prod_{j=1}^m\frac{\theta_{Z,H}}{1-\phi_j} \sum_{k\geq 0}A_k\left(\frac{\theta_{Z,H}\phi_j}{1-\phi_j}\right) x_j^k
 \]
 By Lemma~\ref{lem: image to localization}, the term $\frac{\theta_{Z,H}}{1-\phi_j}$ are the image of some $\psi_j\in \frac{1}{|\phi_j|}R(H)$ satisfying $\psi_j(h)=0$ for any $h\in K_{Z,H}$. Define 
 \[
 \tau_{Z,H} = \prod_{j=1}^m\psi_j \sum_{k\geq 0}A_k\left(\psi_j\phi_j\right) x_j^k\in H^{\even}(Z,\QQ)\otimes R(H).
 \]
Then $\tau_{Z,H}$ maps to  $\theta_{Z,H}\ch_H\lambda_{-1}(N_{Z/X}^*)^{-1}$ by the localization map, and $\Tr(h; \tau_{Z,H}) =0$ for any $h\in K_{Z,H}$. For $h\in H\setminus K_{Z,H}$, we have 
\[
\Tr(h; \tau_{Z,H}) = \Tr(h; \ch_H\lambda_{-1}(N_{Z/X}^*)^{-1})\in H^{\even}(Z,\QQ).
\]
Therefore, $\Tr(h; \tau_{Z,H})$ is uniquely determined for any $h\in H$ and it follows that $\tau_{Z,H}$ is uniquely determined as an element of $H^{\even}(Z,\QQ)\otimes R(H)$.
\end{proof}
\begin{rmk}
As the proof of Lemma~\ref{lem:quasi-inverse-class} shows, the element $\left(\ch_H\lambda_{-1}(N_{Z/X}^*)\right)^{-1}$ in $ H^{\even}(Z,\QQ)\otimes R(H)_g$ does not have a preimage in $H^{\even}(Z,\QQ)\otimes R(H)_\QQ$ in general.
\end{rmk}

\begin{defn}
We define the \emph{ramification Todd class of $Z\subset X$ with respect to $H$} as
   \[
\td_H(Z) =  \td(Z)\cdot \tau_{Z,H}\in H^{\even}(Z,\QQ)\otimes R(H)
\]
\end{defn}
\begin{lem}\label{lem: ch_H td_H}
 Let $Z\in \sZ$ be a stratum.
\begin{enumerate}
    \item     If $\{\id\}\in \sH_Z$, then $Z$ is a component of $X$ and $\td_{\{\id\}}(Z)=0$.
    \item   For $H\subset H'$ in $\sH_Z$, we have 
    \[
   \Res_H^{H'} \ch_{H'}(\sE|_Z) = \ch_{H}(\sE|_Z),\quad \Res_H^{H'}\td_{H'} (Z)= \td_H(Z).
   \]
\end{enumerate}
\end{lem}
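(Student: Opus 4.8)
The plan is to handle the two parts separately, since (i) is essentially a degenerate-case bookkeeping statement and (ii) is the substantive compatibility claim.

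For part (i): if $\{\id\}\in\sH_Z$, then by the very definition of $\sH_Z$, the stratum $Z$ is a component of $X^{\{\id\}}=X$. When $H=\{\id\}$, Definition~\eqref{eq: K} gives $K_{Z,H}=\{\id\}=H$, so the support $H\setminus K_{Z,H}$ of $\theta_{Z,H}$ is empty, hence $\theta_{Z,H}=0$ by \eqref{eqs:theta_ZH} (a class function supported on the empty set is zero, and $R(H)_\QQ\hookrightarrow C(H)$). Now I would trace through the construction of $\tau_{Z,H}$ in Lemma~\ref{lem:quasi-inverse-class}: since $Z$ is a component of $X$, the conormal bundle $N^*_{Z/X}$ has rank $m=0$, so the empty product gives $\tau_{Z,H}=1$; but then $\Tr(h;\tau_{Z,H})=0$ is required for all $h\in K_{Z,H}=H$, which forces $\tau_{Z,H}=0$. (Equivalently, one checks $\tau_{Z,H}=\theta_{Z,H}\cdot(\text{something})=0$.) Either way $\tau_{Z,\{\id\}}=0$, and therefore $\td_{\{\id\}}(Z)=\td(Z)\cdot\tau_{Z,\{\id\}}=0$.

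For part (ii), the first identity $\Res^{H'}_H\ch_{H'}(\sE|_Z)=\ch_H(\sE|_Z)$ is the special case of Lemma~\ref{lem: compatibility}(iii) applied to the trivial action of $H'$ on $Z$: both sides are computed from the same eigensheaf decomposition of $\sE|_Z$, and restricting characters from $H'$ to $H$ is compatible with refining the $H'$-eigendecomposition into the $H$-eigendecomposition. The second identity reduces, since $\td_H(Z)=\td(Z)\cdot\tau_{Z,H}$ and $\td(Z)$ carries no group data, to showing $\Res^{H'}_H\tau_{Z,H'}=\tau_{Z,H}$. I would prove this using the uniqueness clause in Lemma~\ref{lem:quasi-inverse-class}: it suffices to check that $\Res^{H'}_H\tau_{Z,H'}$ satisfies the two characterizing properties of $\tau_{Z,H}$. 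First, one needs $K_{Z,H}=K_{Z,H'}\cap H$ — this follows because $Z$ is a component of $X^{h}$ for $h\in H$ iff $\Tr(h;\phi_j)\neq 1$ for all the eigencharacters $\phi_j$ of $N^*_{Z/X}$ (the eigencharacters for $H'$ restrict to those for $H$), so that $\Tr(h;\Res^{H'}_H\tau_{Z,H'})=\Tr(h;\tau_{Z,H'})=0$ for $h\in K_{Z,H}\subset K_{Z,H'}$. Second, one needs that $\Res^{H'}_H\tau_{Z,H'}$ maps, under localization at a generator $h_0$ of $H$, to $\theta_{Z,H}\ch_H\lambda_{-1}(N^*_{Z/X})^{-1}$; here I would use that $\Res^{H'}_H$ commutes with the Chern-character construction (Lemma~\ref{lem: compatibility}(iii) again) so that $\Res^{H'}_H\ch_{H'}\lambda_{-1}(N^*_{Z/X})=\ch_H\lambda_{-1}(N^*_{Z/X})$, together with the compatibility of localization: a generator $h_0$ of $H$ need not generate $H'$, but the localization map $R(H')\to R(H')_{h_0'}$ for $h_0$ a chosen generator of $H'$ restricts compatibly, and I should check $\Res^{H'}_H\theta_{Z,H'}=\theta_{Z,H}$ from the character description \eqref{eqs:theta_ZH} together with $K_{Z,H}=K_{Z,H'}\cap H$. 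Combining these, $\Res^{H'}_H\tau_{Z,H'}$ has the same image under localization and the same vanishing on $K_{Z,H}$ as $\tau_{Z,H}$, hence equals it by uniqueness.

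The main obstacle I anticipate is the bookkeeping around \emph{which} localization to use: $\tau_{Z,H}$ is pinned down via the map $R(H)\to R(H)_{h_0}$ for a \emph{generator} $h_0$ of $H$, and when I restrict $\tau_{Z,H'}$ from $H'$ I am handed the localization at a generator $h_0'$ of $H'$, whose image in $H$ need not be a generator of $H$. The clean way around this is to avoid chasing a single localization and instead invoke uniqueness purely through traces: by Lemma~\ref{lem:quasi-inverse-class}, $\tau_{Z,H}$ is the unique element of $H^{\even}(Z,\QQ)\otimes R(H)$ with $\Tr(h;\tau_{Z,H})=0$ for $h\in K_{Z,H}$ and $\Tr(h;\tau_{Z,H})=\Tr(h;\ch_H\lambda_{-1}(N^*_{Z/X})^{-1})$ for $h\in H\setminus K_{Z,H}$ (the latter making sense in $H^\even(Z,\QQ)$ since $\ch_H\lambda_{-1}(N^*_{Z/X})$ is a unit after inverting such $h$). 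One then just checks that $\Res^{H'}_H\tau_{Z,H'}$ has exactly these traces, using $K_{Z,H}=K_{Z,H'}\cap H$ and $\Tr(h;\Res^{H'}_H(\cdot))=\Tr(h;\cdot)$; this sidesteps the generator issue entirely.
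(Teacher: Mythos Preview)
Your proposal is correct and follows essentially the same route as the paper: part~(i) via $\theta_{Z,\{\id\}}=0$ and the uniqueness clause of Lemma~\ref{lem:quasi-inverse-class}, and part~(ii) via $K_{Z,H}=K_{Z,H'}\cap H$ together with the uniqueness characterization of $\tau_{Z,H}$. The only cosmetic difference is that the paper sidesteps the generator/localization issue by recasting uniqueness as ``$\tau_{Z,H}$ is the unique element supported on $H\setminus K_{Z,H}$ satisfying $\tau_{Z,H}\cdot\ch_H\lambda_{-1}(N^*_{Z/X})=\theta_{Z,H}$'' and then applying $\Res^{H'}_H$ to this equation, whereas you use the equivalent pointwise trace description; both arguments are the same in substance.
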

\begin{proof}
(i) Since $\{\id\}\in \sH_Z$, $Z$ is a component of $X^{\id}=X$. By definition, $\theta_{Z,\{\id\}} =0$, and hence $\td_{\{\id\}}(Z)=0$.

(ii) The first equality was proved in Lemma~\ref{lem: compatibility}. 
For the second equality, we first show that 
\(\Res^{H'}_H \theta_{Z,H'}=\theta_{Z, H}\).
As both are characteristic modules, it suffices to note that they have the same support, which is clear by definition \eqref{eq: K}:
\[
H\setminus K_{Z,H} = H\setminus K_{Z,H'}.
\]

Now we prove that \(\Res^{H'}_H \tau_{Z,H'}=\tau_{Z, H}\). The equality \(\Res_H^{H'}\td_{H'} (Z)= \td_H(Z)\) then follows directly. The element \(\tau_{Z,H}\), supported on \(H\setminus K_{Z,H}\), is uniquely determined by the equation 
\[
\tau_{Z,H}\cdot \ch_H\lambda_{-1}(N_{Z/X}^*)=\theta_{Z,H}.
\]
Through the above discussion and Lemma~\ref{lem: compatibility} we have
\begin{align*}
\theta_{Z,H}=&\Res^{H'}_{H}\theta_{Z,H'}\\
=&\Res^{H'}_{H}(\tau_{Z,H'})\cdot \Res^{H'}_{H}(\ch_{H'}\lambda_{-1}(N_{Z/X}^*))\\
=&\Res^{H'}_{H}(\tau_{Z,H'})\cdot \ch_H\lambda_{-1}(\Res^{H'}_{H}N_{Z/X}^*)).
\end{align*}
Since \(\Res^{H'}_{H}(\tau_{Z,H'})\) is also supported on \(H\setminus K_{Z,H}\), we conclude that \(\Res^{H'}_{H}(\tau_{Z,H'})=\tau_{Z, H}\).
\end{proof}

The following definition of ramification module generalizes \cite[Definition~3.2]{LL25}, originally conceived for smooth projective curves.
\begin{defn}\label{defn: Gamma_Z}
For each stratum $Z\in \sZ$, we define the \emph{ramification module} of the $G$-manifold $X$ at $Z$ as
\begin{equation}\label{eq: Gamma_Z}
\Gamma(\sE)_Z:=\sum_{H\in \sH_Z} \frac{|H|}{|G|}\Ind_H^G \left(\theta_H 
\int_Z\ch_H(\sE|_Z)\td_H(Z)\right)
\end{equation}
\end{defn}
\begin{lem}\label{lem: Gamma_Z 0}
Let $Z\in \sZ$ be a stratum. Then the following holds for $\Gamma(X)_Z$:
\begin{enumerate}
    \item $\Gamma(\sE)_Z$ is an element of the augmentation ideal $I_{G,\QQ}$. 
    \item If $G_Z=\{\id\}$, then $Z$ is a component of $X$ and $\Gamma(X)_Z=0$.
\end{enumerate}
\end{lem}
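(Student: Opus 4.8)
The plan is to prove each part essentially by tracing through Definition~\ref{defn: Gamma_Z} and invoking the structural facts already established about $\theta_H$, $\theta_{Z,H}$, and $\td_H(Z)$.

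For part (i), I would argue that each summand $\frac{|H|}{|G|}\Ind_H^G\!\left(\theta_H\int_Z\ch_H(\sE|_Z)\td_H(Z)\right)$ lies in $I_{G,\QQ}$, since $I_{G,\QQ}$ is an ideal and the induction map carries $I_{H,\QQ}$ into $I_{G,\QQ}$. So it suffices to show $\theta_H\int_Z\ch_H(\sE|_Z)\td_H(Z)\in I_{H,\QQ}$. If $H$ is nontrivial, this is immediate from the remark following Definition~\ref{defn: theta_H}: $\theta_H$ itself lies in $I_{H,\QQ}$, and $I_{H,\QQ}$ is an ideal, so any multiple of $\theta_H$ is in $I_{H,\QQ}$. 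The only remaining case is $H=\{\id\}$, which can occur in $\sH_Z$ only when $Z$ is a component of $X$ (by Lemma~\ref{lem: ch_H td_H}(i)); but then $\td_{\{\id\}}(Z)=0$, so that summand vanishes and contributes nothing. Hence $\Gamma(\sE)_Z\in I_{G,\QQ}$.

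For part (ii), suppose $G_Z=\{\id\}$. Since $\sH_Z$ consists of subgroups of $G_Z$ and is nonempty, we must have $\sH_Z=\{\{\id\}\}$; in particular $\{\id\}\in\sH_Z$, so by Lemma~\ref{lem: ch_H td_H}(i), $Z$ is a component of $X$ and $\td_{\{\id\}}(Z)=0$. Therefore the single term in the sum defining $\Gamma(\sE)_Z$ is $\frac{1}{|G|}\Ind_{\{\id\}}^G\!\left(\theta_{\{\id\}}\int_Z\ch_{\{\id\}}(\sE|_Z)\cdot 0\right)=0$, giving $\Gamma(\sE)_Z=0$.

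I don't anticipate a genuine obstacle here — the lemma is a bookkeeping consequence of the definitions — but the one point requiring a little care is the boundary case $H=\{\id\}\in\sH_Z$ in part (i): one cannot appeal to "$\theta_H\in I_{H,\QQ}$" there (since $\theta_{\{\id\}}=[1]$ is the unit, not in the augmentation ideal), and must instead use the vanishing of $\td_{\{\id\}}(Z)$. I would state this explicitly to avoid the appearance of a gap.
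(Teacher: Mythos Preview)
Your proof is correct, and for part~(ii) it matches the paper's argument exactly. For part~(i), however, you take a slightly different route from the paper: you use that the explicit factor $\theta_H$ lies in $I_{H,\QQ}$ whenever $H$ is nontrivial, and then treat the boundary case $H=\{\id\}$ separately via $\td_{\{\id\}}(Z)=0$. The paper instead observes that $\id\in K_{Z,H}$ for every $H\in\sH_Z$, so $\Tr(\id;\tau_{Z,H})=0$ by Lemma~\ref{lem:quasi-inverse-class}, hence $\td_H(Z)$ itself already lies in the augmentation ideal, uniformly in $H$, with no case split needed. Both arguments amount to noticing that some factor of the integrand has vanishing trace at the identity; your version is arguably more transparent since $\theta_H$ is the visible factor in Definition~\ref{defn: Gamma_Z}, while the paper's version is more uniform and ties the vanishing directly to the construction of $\tau_{Z,H}$.
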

\begin{proof}
    (i) By Lemma~\ref{eqs:theta_ZH}, $\Tr(1;\theta_{Z,H})=0$. It follows that the traces of $g$ on $\tau_{Z,H}$, $\td_H(Z)$, and $\Gamma(\sE)_Z$ are all zero. 

(ii) Note that $\sH_Z$ is always nonempty and its elements are subgroups of $G_Z$. If $G_Z=\{\id\}$, then $\sH_Z=\{G_Z\}$. It follows that $Z$ is a component of $X=X^\id$, and $\td_{G_Z}(Z)=0$ by Lemma~\ref{lem: ch_H td_H}. Therefore, $\Gamma(X)_Z=0$.
\end{proof}
\begin{thm}\label{thm: CW general}
Under Notation~\ref{nota: CW}, the following equality holds in $R(G)_\QQ$:
\begin{equation}
\label{eq: CW general}
\chi_G(X, \sE) = \frac{1}{|G|}\chi(X, \sE)[\CC[G]] + \sum_{Z\in \sZ} \Gamma(\sE)_Z    
\end{equation}
\end{thm}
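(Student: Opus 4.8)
The plan is to assemble the Chevalley--Weil formula from the Atiyah--Singer trace formula by localizing at each cyclic subgroup, reassembling via Artin's theorem (Proposition~\ref{prop: recover chi}), and then recognizing the result as the definition of $\Gamma(\sE)_Z$. First I would apply Proposition~\ref{prop: recover chi} to the virtual $G$-module $\chi_G(X,\sE)$, obtaining
\[
\chi_G(X,\sE) = \sum_{H\in\sH}\frac{|H|}{|G|}\Ind_H^G\bigl(\theta_H\cdot\Res_H^G\chi_G(X,\sE)\bigr).
\]
The trivial subgroup $H=\{\id\}$ contributes $\frac{1}{|G|}\Ind_{\{\id\}}^G(\theta_{\{\id\}}\cdot\chi(X,\sE)) = \frac{1}{|G|}\chi(X,\sE)[\CC[G]]$ since $\theta_{\{\id\}}=[1_{\{\id\}}]$ and $\Ind_{\{\id\}}^G[1_{\{\id\}}]=[\CC[G]]$; this matches the first term on the right-hand side of \eqref{eq: CW general}. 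So the core of the proof is to show, for each nontrivial cyclic $H\subset G$, that
\[
\frac{|H|}{|G|}\Ind_H^G\bigl(\theta_H\cdot\Res_H^G\chi_G(X,\sE)\bigr) = \sum_{Z\in\sZ_H}\frac{|H|}{|G|}\Ind_H^G\Bigl(\theta_H\int_Z\ch_H(\sE|_Z)\td_H(Z)\Bigr),
\]
after which summing over $H$ and regrouping the contributions of each stratum $Z$ according to which cyclic groups $H$ have $Z$ as a component of $X^H$ (i.e.\ over $\sH_Z$) reproduces exactly $\sum_{Z\in\sZ}\Gamma(\sE)_Z$ as in Definition~\ref{defn: Gamma_Z}.

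The identity above is where the work lies, and it is an identity in $R(H)_\QQ$ after multiplying by $\theta_H$ (equivalently, in $R(H)_\QQ$ localized appropriately). The strategy is to pass to the localization $R(H)_{h_0}$ at a generator $h_0$ of $H$, where $\ch_H\lambda_{-1}(N^H)^*$ becomes invertible. By the Atiyah--Singer formula (Theorem~\ref{thm: AS}) applied to $g=h_0$ (and more generally to all generators $g$ of $H$, which all generate the same localization), the image of $\Res_H^G\chi_G(X,\sE)$ in $R(H)_{h_0}$ equals
\[
\int_{X^H}\frac{\ch_H(\sE|_{X^H})\td(X^H)}{\ch_H\lambda_{-1}(N^H)^*} = \sum_{Z\in\sZ_H}\int_Z\frac{\ch_H(\sE|_Z)\td(Z)}{\ch_H\lambda_{-1}(N_{Z/X}^*)},
\]
using that $X^H$ is the disjoint union of its components $Z\in\sZ_H$ and that the conormal/normal bundle and the restriction of $\sE$ split componentwise. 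Now multiply through by $\theta_H$. Since $\theta_H$ is an idempotent in $R(H)_\QQ$ supported on the generators of $H$, and $\theta_H\cdot\theta_{Z,H}=\theta_H$ by the remark following \eqref{eqs:theta_ZH} (for $H$ nontrivial), we can replace $\ch_H\lambda_{-1}(N_{Z/X}^*)^{-1}$ by $\tau_{Z,H}$ inside the $\theta_H$-multiplied expression: by Lemma~\ref{lem:quasi-inverse-class}, $\tau_{Z,H}$ maps to $\theta_{Z,H}\ch_H\lambda_{-1}(N_{Z/X}^*)^{-1}$ in the localization, hence $\theta_H\tau_{Z,H}$ maps to $\theta_H\theta_{Z,H}\ch_H\lambda_{-1}(N_{Z/X}^*)^{-1}=\theta_H\ch_H\lambda_{-1}(N_{Z/X}^*)^{-1}$. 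Thus in $R(H)_{h_0}$,
\[
\theta_H\cdot\Res_H^G\chi_G(X,\sE) = \sum_{Z\in\sZ_H}\theta_H\int_Z\ch_H(\sE|_Z)\tau_{Z,H}\td(Z) = \sum_{Z\in\sZ_H}\theta_H\int_Z\ch_H(\sE|_Z)\td_H(Z).
\]

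To upgrade this from an equality in the localization $R(H)_{h_0}$ to an equality in $R(H)_\QQ$, I would use the key property that both sides, as elements of $R(H)_\QQ$, are supported only on the generators of $H$: the left side because $\theta_H$ kills all non-generators, and the right side because each $\theta_H\td_H(Z)=\theta_H\tau_{Z,H}\td(Z)$ is likewise supported on $\Supp(\theta_H)$ (as $\theta_H$ is a factor). An element of $R(H)_\QQ$ supported on the generators of $H$ is determined by its image in $R(H)_{h_0}$ — concretely, because $R(H)_\QQ\hookrightarrow C(H)_\QQ$ and the localization map $R(H)_\QQ\to R(H)_{h_0}$ is injective on the subgroup of classes supported on $\langle h_0\rangle$'s generators (equivalently one checks equality of traces: $\Tr(h;-)$ at generators $h$ is read off from the localization via the valuations at the distinct generators, or more directly both sides have equal trace at every $h\in H$, being zero off the generators and agreeing on generators by the localized identity). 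Applying $\Ind_H^G$, scaling by $|H|/|G|$, summing over nontrivial cyclic $H$, adding the trivial-subgroup term, and finally interchanging the order of summation so that each stratum $Z$ collects contributions precisely from $H\in\sH_Z$ (noting $\sZ_H$ and $\sH_Z$ are dual indexings of the pairs $(Z,H)$ with $Z$ a component of $X^H$), we arrive at \eqref{eq: CW general}. The main obstacle is the descent argument in this last paragraph: making precise that the $\theta_H$-multiplied trace formula, a priori valid only after localizing, actually holds in $R(H)_\QQ$, which hinges on the support properties of $\theta_H$ and $\theta_{Z,H}$ established in \eqref{eqs:theta_ZH} and the injectivity of localization on the relevant $\theta_H$-isotypic piece; once that is in hand, the rest is bookkeeping with induction and the definitions.
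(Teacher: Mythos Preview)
Your proposal is correct and follows essentially the same route as the paper: apply Proposition~\ref{prop: recover chi}, separate the trivial subgroup to get the regular-representation term, and for each nontrivial cyclic $H$ use the Atiyah--Singer formula together with $\theta_H\theta_{Z,H}=\theta_H$ to replace $\ch_H\lambda_{-1}(N_{Z/X}^*)^{-1}$ by $\tau_{Z,H}$, then regroup over strata. The only cosmetic difference is that the paper argues the descent step directly by checking $\Tr(h;-)$ at every $h\in H$ (zero off the generators, Atiyah--Singer on the generators), whereas you phrase it via the localization $R(H)_{h_0}$ before noting the equivalent trace check; also, when regrouping you should invoke Lemma~\ref{lem: ch_H td_H}(i) (that $\td_{\{\id\}}(Z)=0$) to justify that re-inserting the trivial group into $\sH_Z$ for components $Z$ of $X$ costs nothing, matching Definition~\ref{defn: Gamma_Z} exactly.
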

\begin{proof}
By Proposition~\ref{prop: recover chi}, we have
\begin{equation}
\label{eq: thm lhs}
\chi_G(X, \sE) = \sum_{H\subset G \text{ cyclic}} \frac{|H|}{|G|}\Ind_H^G(\theta_H\cdot\Res_H^G\chi_G(X, \sE)) 
\end{equation}
By the definition of \(\theta_H\), the class function \(\theta_H\cdot\Res_H^G\chi_G(X, \sE)\)
is determined by the values on the generators $h$ of the group \(H\). By Theorem~\ref{thm: AS} and the definitions of $\ch_H$ and $\td_H$, if $H$ is nontrivial, then
\begin{equation}
    \begin{split}
\Tr(h;\theta_H\cdot\Res_H^G\chi_G(X, \sE))& =\Tr\left(h;\theta_{H}\sum_{Z\in \mathcal{Z}_H}\int_Z\frac{\ch_H(\sE|_Z)\cdot \td(Z)}{\ch_H\lambda_{-1}(N^*_{Z/X})}\right)\\
&=\Tr\left(h;\theta_{H}\theta_{Z,H}\sum_{Z\in \mathcal{Z}_H}\int_Z\frac{\ch_H(\sE|_Z)\cdot \td(Z)}{\ch_H\lambda_{-1}(N^*_{Z/X})}\right)\\
&=\Tr\left(h;\sum_{Z\in \mathcal{Z}_H}\int_Z\theta_H\ch_H(\sE|_Z)\tau_{Z,H}\td(Z)\right) \\
&= \Tr\left(h;\sum_{Z\in \mathcal{Z}_H}\int_Z\theta_H\ch_H(\sE|_Z)\td_H(Z)\right).
    \end{split}
\end{equation}
Hence, \(\theta_H\Res_H^G\chi_G(X, \sE)\) and $\int_Z\theta_H\ch_H(\sE|_Z)\td_H(Z)$
represent the same class in \(R(H)_\QQ\). Thus, the right hand side of \eqref{eq: thm lhs} can be written as
\begin{equation}\label{eq: thm lhs2}
\frac{1}{|G|}\Ind_{\{\id\}}^G(\theta_{\{\id\}}\cdot\Res_{\{\id\}}^G\chi_G(X, \sE))+ \sum_{\{\id\}\subsetneq H\subset G \text{ cyclic}} \frac{|H|}{|G|}\Ind_H^G(\theta_H\cdot\Res_H^G\chi_G(X, \sE)).
\end{equation}
The first terms simplifies to $\frac{1}{|G|}\chi(X, \sE)[\CC[G]]$. The second term becomes
\[
\sum_{\{\id\}\subsetneq H\subset G \text{ cyclic}} \frac{|H|}{|G|}\Ind_H^G\sum_{Z\in \sZ_H}\int_Z\theta_H\ch_H(\sE|_Z)\td_H(Z).
\]
Reordering the summation gives
\begin{equation}\label{eq: thm lhs3}
\sum_{Z\in \sZ}\sum_{\{\id\}\neq H\in \sH_Z} \frac{|H|}{|G|}\Ind_H^G\int_Z\theta_H\ch_H(\sE|_Z)\td_H(Z) = \sum_{Z\in \sZ} \Gamma(\sE)_Z,
\end{equation}
where we have used Definition~\ref{defn: Gamma_Z} and Lemma~\ref{lem: Gamma_Z 0}.  Substituting \eqref{eq: thm lhs3} into \eqref{eq: thm lhs2} yields the required equality \eqref{eq: CW general}.
\end{proof}

\begin{rmk}
Theorem~\ref{thm: CW general} recovers \cite[Theorem~3.6]{LL25} in the case of (possibly disconnected) complex smooth projective curves.
\end{rmk}
 
Using \cite[5.5]{Don69} instead of Theorem~\ref{thm: AS}, one obtains an algebraic version of Theorem~\ref{thm: CW general}.
\begin{thm}
Let $k$ be an algebraically closed field of characteristic $p\geq 0$ and $X$ a smooth proper variety over $k$. Let $G$ be a finite group such that $p\nmid |G|$, acting on $X$. Then we may define the set $\sZ$ of strata as in Definition~\ref{defn: strata}, and for each stratum $Z\in \sZ$ and a locally free $G$-sheaf $\sE$ on $X$, a ramification module $\Gamma(\sE)_Z$, 
depending only on the restriction $\sE|_Z$ as a $G_Z$-sheaf,
such that the following holds in $R_k(G)_\QQ$:
\[
\chi_G(X, \sE) = \frac{1}{|G|}\chi(X, \sE)[\CC[G]] + \sum_{Z\in \sZ} \Gamma(\sE)_Z.
\]
\end{thm}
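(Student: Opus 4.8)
The plan is to run the proof of Theorem~\ref{thm: CW general} essentially unchanged, replacing the Atiyah--Singer formula (Theorem~\ref{thm: AS}) by Donovan's algebraic Lefschetz--Riemann--Roch fixed-point formula~\cite[5.5]{Don69}; the hypothesis $p\nmid|G|$ is exactly what makes every ingredient of Sections~2 and~3 transportable to characteristic $p$ (and in the conclusion $[\CC[G]]$ is to be read as $[k[G]]$, the class of the regular representation of $G$ over $k$). First I would set up the coefficient ring. Since $k$ is algebraically closed of characteristic prime to $|G|$, it is a splitting field for $G$ and contains a primitive $|G|$-th root of unity; hence $R_k(G)$ is free on the irreducible $k$-representations, the trace maps $\Tr_g\colon R_k(G)\to k$ are ring homomorphisms with prime kernels $\gp_g$, the rings $R_k(G)$ and $R_k(G)_\QQ$ have trivial nilradical, and Lemma~\ref{lem: image to localization} holds verbatim since its proof only manipulates roots of unity. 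The identity~\eqref{eqs:induced-character-funs}, being a combinatorial statement about the lattice of cyclic subgroups of $G$, continues to hold, so Definition~\ref{defn: theta_H} gives $\theta_H\in\frac{1}{|H|}R_k(H)$ and Proposition~\ref{prop: recover chi} remains valid in $R_k(G)_\QQ$.

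Next I would record the two geometric facts that replace the complex-analytic ones. Because $H\subset G$ is linearly reductive of order prime to $p$, the fixed subscheme $X^H$ is smooth, and $H$ acts semisimply on $N_{X^H/X}$, yielding the eigenbundle decomposition~\eqref{eq:eigen-decomposition}; both are standard for tame actions. I would then work with the Grothendieck ring $K_G(X)$ of locally free $G$-sheaves and, in place of $H^{\even}(-,\ZZ)$, the Chow ring with rational coefficients $\mathrm{CH}^*(-)_\QQ$, equipped with the equivariant Chern character $\ch_G\colon K_G(X)\to \mathrm{CH}^*(X)_\QQ\otimes R_k(G)$, the Todd class, and proper pushforward; the compatibilities of Lemma~\ref{lem: compatibility} hold here by the same arguments. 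Donovan's theorem~\cite[5.5]{Don69} is a localization statement in equivariant $K$-theory; applying the equivariant Grothendieck--Riemann--Roch transformation together with the self-intersection formula for the regular immersion $X^H\hookrightarrow X$ converts it into the analogue of~\eqref{eqs:AS-Lefschetz-trace-formula}: for each $g\in G$ with $H:=\langle g\rangle$ and $N^*:=N^*_{X^H/X}$,
\[
\Tr(g;\chi_G(X,\sE))=\Tr\!\left(g;\int_{X^H}\frac{\ch_H(\sE|_{X^H})\,\td(X^H)}{\ch_H\lambda_{-1}(N^*)}\right).
\]
(Alternatively one may stay entirely inside equivariant $K$-theory, running the whole argument with $K_H(Z)_\QQ$ in place of $\mathrm{CH}^*(Z)_\QQ\otimes R_k(H)$; the formal steps below are insensitive to the choice.)

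With these inputs, all of Section~3 transcribes without change. The class $\ch_H\lambda_{-1}(N^*_{Z/X})$ is a unit in $\mathrm{CH}^*(Z)_\QQ\otimes R_k(H)_{h}$ because its trace at a generator $h$ is the nonzero element $\prod_j(1-\Tr(h;\phi_j))$ of $k$; the modules $\theta_{Z,H}\in R_k(H)_\QQ$ of~\eqref{eq: theta_ZH}, the partial inverse $\tau_{Z,H}$ of Lemma~\ref{lem:quasi-inverse-class} (whose proof is purely formal), the ramification Todd class $\td_H(Z)=\td(Z)\tau_{Z,H}$, and the ramification module
\[
\Gamma(\sE)_Z:=\sum_{H\in\sH_Z}\frac{|H|}{|G|}\Ind_H^G\!\left(\theta_H\int_Z\ch_H(\sE|_Z)\,\td_H(Z)\right)
\]
are all defined in $R_k(G)_\QQ$, and by the analogues of Lemmas~\ref{lem: ch_H td_H} and~\ref{lem: Gamma_Z 0} they depend only on $\sE|_Z$ and $N_{Z/X}$ as $G_Z$-equivariant bundles. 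The proof of Theorem~\ref{thm: CW general} then applies line by line: apply Proposition~\ref{prop: recover chi} to $\chi_G(X,\sE)$, evaluate each $\Tr(h;\theta_H\Res_H^G\chi_G(X,\sE))$ using the displayed fixed-point formula together with $\theta_H\theta_{Z,H}=\theta_H$ and the defining relation $\tau_{Z,H}\ch_H\lambda_{-1}(N^*_{Z/X})=\theta_{Z,H}$, separate off the $H=\{\id\}$ term (which contributes $\frac{1}{|G|}\chi(X,\sE)[k[G]]$), and reorder the double sum over cyclic subgroups $H$ and strata $Z$.

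I expect the only substantive work to be isolating the precise algebraic Lefschetz--Riemann--Roch statement needed and the bookkeeping around it: Donovan's formula is most naturally a statement in localized equivariant $K$-theory $K_G(X)_g$, and extracting from it the decompleted sum over the connected components of $X^g$ requires the smoothness of $X^g$ together with the excess-free self-intersection formula for $X^g\hookrightarrow X$, after which one must check that the Chern-character/Todd transcription into $\mathrm{CH}^*_\QQ$ — or, alternatively, the decision to remain in $K$-theory — is compatible with the maps of Lemma~\ref{lem: compatibility}. The tameness inputs ($X^H$ smooth, $N_{X^H/X}$ semisimple as an $H$-bundle) are standard given $p\nmid|G|$, and the representation-theoretic and combinatorial parts of the argument are characteristic-free; so once the fixed-point formula is in hand the theorem follows by the proof already given for Theorem~\ref{thm: CW general}.
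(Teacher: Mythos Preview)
Your proposal is correct and matches the paper's approach exactly: the paper's entire argument for this theorem is the single sentence ``Using \cite[5.5]{Don69} instead of Theorem~\ref{thm: AS}, one obtains an algebraic version of Theorem~\ref{thm: CW general},'' and you have carefully spelled out precisely what this entails. Your observation that the conclusion should read $[k[G]]$ rather than $[\CC[G]]$, and your care in checking that the representation-theoretic inputs (splitting field, $\theta_H$, Proposition~\ref{prop: recover chi}) and the geometric inputs (smoothness of $X^H$, semisimplicity of $N_{X^H/X}$) survive under the tameness hypothesis $p\nmid|G|$, supply more detail than the paper itself provides.
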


Using the formula~\eqref{eq: CW general}, we can generalize the classical Chevalley--Weil formula on curves, which describes the multiplicity of any irreducible representation in the virtual \(G\)-module \(\chi_G(X, \sE)\) in terms of the ramification data.
\begin{cor}\label{cor: multiplicity}
Under Notation~\ref{nota: CW}, for each irreducible $M\in \hG$, its multiplicity in $\chi_G(X, \sE)$ is given by
\[
\mu_M\chi_G(X, \sE) = \frac{\dim M}{|G|}\chi(X,\sE)+\sum_{Z\in \sZ} \sum_{H\in \sH_Z}\frac{|H|}{|G|}\left\langle \Res^G_HM, \int_Z\theta_H \ch_H(\sE|_Z)\td_H(Z)\right\rangle_H.
\]
\end{cor}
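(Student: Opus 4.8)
The plan is to apply the multiplicity functional $\mu_M=\langle M,-\rangle_G$ directly to the Chevalley--Weil identity of Theorem~\ref{thm: CW general} and read off the two contributions summand by summand. First I would recall that, since the classes of irreducible $G$-modules form a $\ZZ$-basis of $R(G)$, hence a $\QQ$-basis of $R(G)_\QQ$, the assignment $\phi\mapsto \mu_M\phi$ (the coefficient of $[M]$ in the basis expansion) is a well-defined $\QQ$-linear functional on $R(G)_\QQ$, and it coincides with $\langle M,-\rangle_G$ because $\langle M,N\rangle_G=\delta_{MN}$ for irreducibles and the coefficients appearing are real. Applying $\mu_M$ to
\[
\chi_G(X, \sE) = \frac{1}{|G|}\chi(X, \sE)[\CC[G]] + \sum_{Z\in \sZ} \Gamma(\sE)_Z
\]
then reduces the problem to computing $\mu_M$ of each term on the right.

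For the ``free part'' I would use the decomposition $[\CC[G]]=\sum_{N\in\hG}(\dim N)[N]$ of the regular representation, which gives $\mu_M[\CC[G]]=\dim M$; hence $\mu_M\big(\tfrac{1}{|G|}\chi(X,\sE)[\CC[G]]\big)=\tfrac{\dim M}{|G|}\chi(X,\sE)$, the first term in the asserted formula. For each ramification module I would substitute its definition
\[
\Gamma(\sE)_Z=\sum_{H\in \sH_Z} \frac{|H|}{|G|}\Ind_H^G \Big(\theta_H \int_Z\ch_H(\sE|_Z)\td_H(Z)\Big),
\]
use $\QQ$-linearity of $\mu_M$ to pull the sum over $H$ and the coefficients $\tfrac{|H|}{|G|}$ outside, and then invoke Frobenius reciprocity $\langle M,\Ind_H^G\psi\rangle_G=\langle \Res^G_H M,\psi\rangle_H$ with $\psi=\theta_H\int_Z\ch_H(\sE|_Z)\td_H(Z)\in R(H)_\QQ$. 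Finally I would note that $\int_Z$ is $R(H)$-linear, so $\theta_H\int_Z\ch_H(\sE|_Z)\td_H(Z)=\int_Z\theta_H\ch_H(\sE|_Z)\td_H(Z)$, matching the integrand written in the statement; summing over $Z\in\sZ$ and $H\in\sH_Z$ produces exactly the second term.

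I expect essentially no serious obstacle: the corollary is a formal consequence of Theorem~\ref{thm: CW general} together with the two classical facts (decomposition of the regular representation and Frobenius reciprocity). The only points that warrant a word of care are (a) checking that $\mu_M$ descends to a $\QQ$-linear functional on $R(G)_\QQ$ agreeing with $\langle M,-\rangle_G$ there, so that the integer $\mu_M\chi_G(X,\sE)$ can legitimately be compared with a quantity computed in $R(G)_\QQ$; and (b) keeping track of the placement of the scalar $\theta_H$ relative to $\int_Z$, which is harmless by $R(H)$-linearity of the integration map. No new estimates or constructions are required.
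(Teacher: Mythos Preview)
Your proposal is correct and follows essentially the same approach as the paper: apply $\langle M,-\rangle_G$ to the identity of Theorem~\ref{thm: CW general}, evaluate the regular-representation term via $\mu_M[\CC[G]]=\dim M$, and handle each $\Gamma(\sE)_Z$ by Frobenius reciprocity after substituting Definition~\ref{defn: Gamma_Z}. The only difference is that you spell out the bookkeeping points (extension of $\mu_M$ to $R(G)_\QQ$, $R(H)$-linearity of $\int_Z$) that the paper leaves implicit.
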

\begin{proof}
By Theorem~\ref{thm: CW general}, we have
\[
\mu_M\chi_G(X, \sE)= \langle  [M], \chi_G(X, \sE) \rangle_G  = \langle [M], \frac{1}{|G|}\chi(X, \sE)[\CC[G]]\rangle_G + \sum_{Z\in \sZ} \langle [M],\Gamma(\sE)_Z  \rangle_G.
\]
The first term equals $\frac{\dim M}{|G|}\chi(X,\sE)$ by a standard property of the regular representation $\CC[G]$. For the second term, using the Frobenius reciprocity and the definition of $\Gamma(\sE)_Z$ yields
\[
\langle [M],\Gamma(\sE)_Z  \rangle_G =\sum_{H\in\sH_Z} \frac{|H|}{|G|}\left\langle \Res^G_H[M], \int_Z\theta_H \ch_H(\sE|_Z)\td_H(Z)\right\rangle_H
\]
for any stratum $Z\in \sZ$.
\end{proof}

\section{Computation of the ramification module in special cases}\label{sec: ramification module}
The practical utility of the Chevalley--Weil formula \eqref{eq: CW general} often depends on the computability of the ramification modules $\Gamma(\sE)_Z$. In the following, we illustrate how $\Gamma(\sE)_Z$ can be simplified under restrictions on the stabilizer $G_Z$, or on the dimension and codimension of $Z$.

If the stabilizer $G_Z$ is cyclic, then $\Gamma(\sE)_Z$ simplifies, as one does not need to sum over all of $\sH_Z$.
\begin{lem}\label{lem: cyclic intertia}
Suppose that $Z\in \sZ$ is a stratum with cyclic stabilizer $G_Z$. 
Then
\begin{equation}
\label{eq: Gamma_Z G_Z cyclic}
\Gamma(\sE)_Z =\frac{|G_Z|}{|G|}\Ind_{G_Z}^G \int_Z\ch_{G_Z}(\sE|_Z)\td_{G_Z}(Z).
\end{equation}
\end{lem}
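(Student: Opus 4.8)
The plan is to collapse the sum over $\sH_Z$ in Definition~\ref{defn: Gamma_Z} to its single term indexed by $H=G_Z$, exploiting that $G_Z$ is cyclic. First I would observe that $G_Z\in\sH_Z$: the set $\sH_Z$ is nonempty and all its members are subgroups of $G_Z$, so choosing any $H_0\in\sH_Z$ and using $Z\subseteq X^{G_Z}\subseteq X^{H_0}$ one sees that the connected component of $X^{G_Z}$ containing $Z$ is a connected subset of $X^{H_0}$ that contains $Z$, hence lies in the component $Z$ of $X^{H_0}$ and therefore equals $Z$; thus $Z$ is a component of $X^{G_Z}$ and $\ch_{G_Z}(\sE|_Z)$, $\td_{G_Z}(Z)$ are defined. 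Set $\alpha:=\int_Z\ch_{G_Z}(\sE|_Z)\td_{G_Z}(Z)\in R(G_Z)_\QQ$. By Lemma~\ref{lem: ch_H td_H}(ii) and the fact that $\int_Z$ commutes with restriction of coefficients, $\int_Z\ch_H(\sE|_Z)\td_H(Z)=\Res^{G_Z}_H\alpha$ for every $H\in\sH_Z$, so
\[
\Gamma(\sE)_Z=\sum_{H\in\sH_Z}\frac{|H|}{|G|}\Ind_H^G\!\bigl(\theta_H\cdot\Res^{G_Z}_H\alpha\bigr).
\]
Using transitivity of induction $\Ind_H^G=\Ind_{G_Z}^G\circ\Ind_H^{G_Z}$ and the projection formula $\Ind_H^{G_Z}(\theta_H\cdot\Res^{G_Z}_H\alpha)=(\Ind_H^{G_Z}\theta_H)\cdot\alpha$ (\cite[Section~3.3, Example~5]{Ser77}), this becomes $\Gamma(\sE)_Z=\frac{1}{|G|}\Ind_{G_Z}^G\bigl(\alpha\cdot\sum_{H\in\sH_Z}|H|\Ind_H^{G_Z}\theta_H\bigr)$.

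The core of the proof is then to show $\alpha\cdot\sum_{H\in\sH_Z}|H|\Ind_H^{G_Z}\theta_H=|G_Z|\,\alpha$ in $R(G_Z)_\QQ$. Since $G_Z$ is cyclic, formula~\eqref{eqs:induced-character-funs} applied to $G_Z$ gives $|G_Z|[1_{G_Z}]=\sum_{B\subseteq G_Z}|B|\Ind_B^{G_Z}\theta_B$, the sum running over all subgroups $B$ of $G_Z$. Hence it is enough to prove $\alpha\cdot\Ind_B^{G_Z}\theta_B=0$ for each subgroup $B\subseteq G_Z$ with $B\notin\sH_Z$. I would check this on characters, using $R(G_Z)_\CC\cong C(G_Z)$: it suffices that $\Tr(g;\alpha)\cdot\Tr(g;\Ind_B^{G_Z}\theta_B)=0$ for all $g\in G_Z$. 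As $G_Z$ is abelian, $\Tr(g;\Ind_B^{G_Z}\theta_B)=0$ unless $g$ generates $B$; so fix a generator $g$ of $B$. Then $\langle g\rangle=B\notin\sH_Z$ means $Z$ is not a component of $X^g$, which in particular forces $Z$ not to be a component of $X$ (otherwise, being clopen in $X$, it would be a component of every $X^B$), so $K_{Z,G_Z}$ equals $\{h\in G_Z:Z\text{ is not a component of }X^h\}$ by \eqref{eq: K}, and hence $g\in K_{Z,G_Z}$. Since $\td_{G_Z}(Z)=\td(Z)\,\tau_{Z,G_Z}$ and $\Tr(h;\tau_{Z,G_Z})=0$ for $h\in K_{Z,G_Z}$ by Lemma~\ref{lem:quasi-inverse-class}, multiplicativity of $\Tr_g$ on $H^\even(Z,\QQ)\otimes R(G_Z)$ gives $\Tr(g;\alpha)=0$. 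Thus $\Tr(g;\alpha\cdot\Ind_B^{G_Z}\theta_B)=0$ for every $g$, so $\alpha\cdot\Ind_B^{G_Z}\theta_B=0$. (When $Z$ is a component of $X$ there is nothing to remove: every subgroup of $G_Z$ lies in $\sH_Z$, so the sum already equals $|G_Z|[1_{G_Z}]$.)

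Putting these together yields $\Gamma(\sE)_Z=\frac{1}{|G|}\Ind_{G_Z}^G(|G_Z|\,\alpha)=\frac{|G_Z|}{|G|}\Ind_{G_Z}^G\int_Z\ch_{G_Z}(\sE|_Z)\td_{G_Z}(Z)$, which is \eqref{eq: Gamma_Z G_Z cyclic}. The main obstacle I anticipate is the character computation showing $\alpha\cdot\Ind_B^{G_Z}\theta_B=0$ for $B\notin\sH_Z$: one has to pin down that the generators of such a $B$ lie in $K_{Z,G_Z}$ and then apply the precise vanishing of $\tau_{Z,G_Z}$ on $K_{Z,G_Z}$ from Lemma~\ref{lem:quasi-inverse-class}. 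The remaining ingredients — transitivity of induction, the projection formula, and compatibility of $\int_Z$ with restriction — are routine.
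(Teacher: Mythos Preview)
Your proposal is correct and follows essentially the same approach as the paper: show $G_Z\in\sH_Z$, use Lemma~\ref{lem: ch_H td_H}(ii) to rewrite each summand as a restriction from $G_Z$, apply transitivity of induction together with the projection formula, and then extend the sum over $\sH_Z$ to all subgroups of $G_Z$ by checking that the extra terms vanish, finally invoking the Artin identity \eqref{eqs:induced-character-funs}/Proposition~\ref{prop: recover chi}. The only cosmetic differences are that the paper argues $G_Z\in\sH_Z$ via the normal bundle (a trivial eigen-direction would contradict $Z$ being a component of some $X^H$) rather than your topological containment argument, and that the paper phrases the vanishing of the missing terms as $\Res_H^{G_Z}\td_{G_Z}(Z)=0$ for $H\subset K$, whereas you verify $\alpha\cdot\Ind_B^{G_Z}\theta_B=0$ directly on characters; both amount to the support of $\tau_{Z,G_Z}$ being disjoint from $K_{Z,G_Z}$.
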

\begin{proof}
    We first claim that \(G_Z\in \mathcal{H}_Z\). 
    This is clear if $Z$ is a component of $X$. 
    If \(Z\) is not a component in the fixed locus \(X^{G_Z}\), \(G_Z\) acts identically along some normal direction \(v\in N_{Z/X}\). Consequently, any cyclic subgroup of \(G_Z\) acts identically along \(v\), which contradicts that \(Z\), as a stratum, is a component in \(X^H\) for some subgroup \(H\subset G_Z\).     
    
    Using Lemma~\ref{lem: ch_H td_H}, we have
\begin{equation}\label{eq: Gamma_Z G_Z cyclic 2}
\begin{split}
\Gamma(\sE)_Z & = \sum_{H\in \sH_Z}\frac{|H|}{|G|}\Ind_H^G \int_Z\theta_H\ch_H(\sE|_Z)\td_H(Z) \\
  & = \frac{|G_Z|}{|G|}\Ind_{G_Z}^G   \sum_{H\in \sH_Z}\frac{|H|}{|G_Z|}\Ind_H^{G_Z} \theta_H \Res_H^{G_Z}\int_Z\ch_{G_Z}(\sE|_Z)\td_{G_Z}(Z).
\end{split}
\end{equation}

Let $K:=\{g\in G_Z\mid \text{$Z$ is not a component of $X^g$}\}$. Then
$\sH_Z$ consists of subgroups of $G_Z$ that are not contained in $K$. For a cyclic subgroup $H\subset G_Z$ contained in $K$, we have $\Res^{G_Z}_H\td_{G_Z}(Z)=0$ and hence $\Res_H^{G_Z}\int_Z\ch_{G_Z}(\sE|_Z)\td_{G_Z}(Z)=0$.

Thus, the last summation of \eqref{eq: Gamma_Z G_Z cyclic 2} is the same as summing over all the subgroups $H\subset G_Z$:
\begin{align*}
    \sum_{H\subset G_Z}\frac{|H|}{|G_Z|}\Ind_H^{G_Z} \theta_H \Res_H^{G_Z}\int_Z\ch_{G_Z}(\sE|_Z)\td_{G_Z}(Z)
= \int_Z\ch_{G_Z}(\sE|_Z)\td_{G_Z}(Z) 
\end{align*}
where the equality follows from Proposition~\ref{prop: recover chi}. Substituting this into \eqref{eq: Gamma_Z G_Z cyclic 2} yields the required formula \eqref{eq: Gamma_Z G_Z cyclic}.
\end{proof}

\begin{ex}\label{ex: codim 0}
Suppose that $Z\in \sZ$ is a connected component of $X$. Then 
\[
\sH_Z=\{H\mid H\subset G_Z \text{ cyclic}\}.
\]
The ramification module $\Gamma(\sE)_Z$ can be computed as follows:
\begin{align*}
\Gamma(\sE)_Z & =\sum_{H\in \sH_Z} \frac{|H|}{|G|}\Ind_H^G \int_Z\theta_H\ch_H(\sE|_Z)\td_H(Z) \\
& =\frac{|G_Z|}{|G|}\Ind^{G}_{G_Z}\sum_{H\in \sH_Z} \frac{|H|}{|G_Z|}\Ind_H^{G_Z} \int_Z\theta_H \td_H(Z)\cdot \Res_H^{G_Z} \ch_{G_Z}(\sE|_Z)) \\
& = \frac{|G_Z|}{|G|}\Ind^{G}_{G_Z}\sum_{H\in \sH_Z} \frac{|H|}{|G_Z|}\int_Z\ch_{G_Z}(\sE|_Z)\cdot\Ind_H^{G_Z} (\theta_H \td_H(Z))\\
\end{align*}
Recall that \(\td_H(Z)=\td(Z)\) if \(H\neq \{\id\}\), and \(\td_{\{\id\}}(Z)=0\). 
Then the last term equals
\[
\frac{|G_Z|}{|G|}\Ind^{G}_{G_Z}\int_Z\ch_{G_Z}(\sE|_Z)\td(Z)\left(\sum_{H\subset G_Z ~\text{cyclic}}\frac{|H|}{|G_Z|}\Ind_H^{G_Z} \theta_H-\frac{1}{|G_Z|}\Ind^{G_Z}_{\{\id\}}\theta_{\{\id\}}\right)
\]
By the Hirzebruch--Riemann--Roch theorem and Formula \eqref{eq: recover chi}, this becomes
\[
\frac{|G_Z|}{|G|}\Ind^{G}_{G_Z}\chi_{G_Z}(Z,\sE|_Z)\left([1_{G_Z}]-\frac{1}{|G_Z|}\Ind^{G_Z}_{\{\id\}}\theta_{\{\id\}}\right)
\]
Simplifying yields
\begin{align*}
&\frac{|G_Z|}{|G|}\Ind^{G}_{G_Z}\chi_{G_Z}(Z,\sE|_Z)-\frac{1}{|G|}\Ind^{G}_{G_Z}\Ind^{G_Z}_{\{\id\}}(\theta_{\{\id\}}\cdot \Res^{G_Z}_{\{\id\}}\chi_{G_Z}(Z,\sE|_Z))\\
=&\frac{|G_Z|}{|G|}\Ind^{G}_{G_Z}\chi_{G_Z}(Z,\sE|_Z)-\frac{\chi(Z,\sE|_Z)}{|G|}[\mathbb{C}[G]]
\end{align*}
where \(\chi_{G_Z}(Z,\sE|_Z)\) represents the \(G_Z\)-Euler characteristic of \(\sE|_Z\),
and \(\chi(Z, \sE|_Z)\) is the usual Euler characteristic.
\if false Now note that, by \cite[Proposition~27]{Ser77},
 \[
 \sum_{H\subset K \text{ cyclic}} \Ind_H^K |H|\cdot\theta_H = |K|\cdot [1_K],
 \]
so we deduce that
\begin{equation}\label{eq: Gamma(E)_Z}
\Gamma(\sE)_Z = \frac{|K|}{|G|}\chi(Z, \sE) \Ind_K^G [1_K]
\end{equation}
\fi

If $G_Z=\{\id\}$ for each component $Z$ of $X$, then $\sH_Z$ consists of only the trivial group, and we have $\Gamma(\sE)_{Z}=0$.
In this case, \eqref{eq: CW general} becomes
\begin{equation}\label{eq: CW faithful 2}
\chi_G(X, \sE) =\frac{1}{|G|}\chi(X,\sE)[\CC[G]]+ \sum_{Z'\in \sZ'} \Gamma(\sE)_{Z'}.  
\end{equation} 
where $\sZ'\subset \sZ$ consists of the strata with positive codimension in $X$.
\end{ex}

\begin{ex}
\label{ex: dim 0}
Suppose that $P$ is an isolated point in $X^g$ for some $g\in G$ and $\codim_{X}\{P\}>0$. Then $\{P\}\in \sZ$ and $H:=\langle g\rangle\in \sH_{P}$. 
We have $\td(\{P\})=1$. Decompose the conormal space $N_{Z/X}^*=T_P^*X$ into eigenspaces 
$T_P^*X=\bigoplus_{\phi\in \hH}V_{\phi}$, and let \(m_{\phi}\) denote the dimension of \(V_{\phi}\). Then
\[
\td_{H}(Z) = \prod_{\phi\in \hH} \left(-\frac{1}{|\phi|}\sum_{d=0}^{|\phi|-1} d\phi^d\right)^{m_{\phi}}
\]
where \(|\phi|\) is the order of \(\phi\). Also,
\[
\ch_{H}(\sE|_P) = [\sE|_P]_H\in R(H).
\]
Therefore,
\[
\Gamma_G(\sE)_P = \sum_{H\in \sH_P} \frac{|H|}{|G|}\Ind_H^G\theta_H [\sE|_P]_H  
\prod_{\phi\in \hH} \left(-\frac{1}{|\phi|}\sum_{d=0}^{|\phi|-1} d\phi^d\right)^{m_{\phi}}
\]
\end{ex}

\begin{ex}
\label{ex: codim 1}
Suppose that $X$ is connected and $G$-action on $X$ is faithful. 
Let $Z\in \sZ$ be a stratum with $\codim_X Z=1$. Then $G_Z$ is cyclic. 
The conormal bundle $N^*_{Z/X}$ has rank $1$, 
and $G_Z$ acts on it by a character $\phi_Z\colon G_Z\rightarrow \CC^*$. 
Since the codimension of \(Z\) is the smallest among all strata except \(X\), 
$\sH_Z$ consists of all the nontrivial subgroup of $G_Z$ and the subset $K_{Z,G_Z}$ in \eqref{eq: K} is the identity. Therefore, $\theta_{Z,G_Z} = [1_{G_Z}]-\frac{1}{|G_Z|}[\CC[G_Z]]$ (see \eqref{eq: theta_ZH} for the definition of $\theta_{Z,H}$ with $H\in \sH_Z$).
By Lemma~\ref{lem: cyclic intertia}, we have
\begin{equation}\label{eq: codim 1}
\Gamma(\sE)_Z  =\frac{|G_Z|}{|G|}\Ind_{G_Z}^G\sum_{\psi\in\hG_Z}\psi \int_Z\ch(\sE_{Z,\psi})\td_{G_Z}(Z)
\end{equation}
where $[\sE|_Z]_{G_Z}=\sum_{\psi\in\hG_Z} [\sE_{Z,\psi}]\otimes [\psi] \in K(Z)\otimes R(G_Z)$.
\end{ex}

\section{The action of \texorpdfstring{$(\ZZ/2\ZZ)^n$}{} on a compact complex surface}\label{sec: surface}
We illustrate the general Chevalley--Weil formula of Theorem~\ref{thm: CW general} by working out the case where $G\cong (\ZZ/2\ZZ)^n$ acts on a connected compact complex surface.

First, we refine the expression of the ramification module from Example~\ref{ex: codim 1} at a fixed curve on a surface.
\begin{lem}\label{lem: Gamma_Z curve}
Let $X$ be a connected compact complex surface, $G\subset \Aut(X)$ a finite subgroup, and $\sE$ a $G$-equivariant locally free sheaf on $X$. Suppose that $C$ is a one-dimensional stratum of $(X, G)$. Denote by \(G_C\) the stabilizer group of \(C\), and by \(\phi_C\colon G_C\to \CC^*\) the character of the rank \(1\) conormal bundle \(N^*:=N^*_{C/X}\) under the action of \(G_C\). Then
    \[
    \Gamma(\sE)_C =- \frac{|G_C|}{|G|}\Ind_{G_C}^G \theta_{C,G_C}\sum_{\psi\in \hG_C}\psi\left(\frac{\chi(\sE_{C,\psi})}{|G_C|}\sum_{d=0}^{|G_C|-1}d\phi_C^d + \frac{(\rk\, \sE_{C,\psi})\cdot C^2}{|G_C|^2}\left(\sum_{d=0}^{|G_C|-1}d\phi_C^d \right)^2\phi_C \right)
    \]
    where $\theta_{C,G_C} = [1_{G_C}]-\frac{1}{|G_C|}[\CC[G_C]]$.
\end{lem}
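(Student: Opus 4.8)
The plan is to specialize the codimension-one formula already obtained in Example~\ref{ex: codim 1}. Since $C$ is a one-dimensional stratum of the connected surface $X$, that example (building on Lemma~\ref{lem: cyclic intertia}) already records that $G_C$ is cyclic, that $K_{C,G_C}=\{\id\}$ --- equivalently $\phi_C$ is faithful, so $|\phi_C|=|G_C|$ --- and that $\theta_{C,G_C}=[1_{G_C}]-\frac{1}{|G_C|}[\CC[G_C]]$, together with the identity
\[
\Gamma(\sE)_C=\frac{|G_C|}{|G|}\Ind_{G_C}^G\sum_{\psi\in\hG_C}\psi\int_C\ch(\sE_{C,\psi})\td(C)\,\tau_{C,G_C}.
\]
So all that remains is to make $\tau_{C,G_C}$ and this integral completely explicit and to recognise the result.

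First I would compute $\tau_{C,G_C}$. Writing $x:=c_1(N^*)\in H^2(C,\ZZ)$, we have $\ch_{G_C}\lambda_{-1}(N^*)=1-\phi_C e^{x}$ by \eqref{eq: ch lambda_-1}; since $\dim_\CC C=1$, any product of two positive-degree cohomology classes on $C$ vanishes, so $e^{x}-1=x$ and the formal power series in the proof of Lemma~\ref{lem:quasi-inverse-class} truncates after its linear term (with $A_0=1$, $A_1(s)=s$ in the notation of that proof), giving $\tau_{C,G_C}=\psi_1+\psi_1^2\phi_C x$, where $\psi_1\in R(G_C)_\QQ$ is the preimage of $\theta_{C,G_C}/(1-\phi_C)$ under localization at a generator of $G_C$. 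Applying Lemma~\ref{lem: image to localization} with $\psi=\theta_{C,G_C}$ --- which has vanishing trace at $\id$, the unique element of $K_{C,G_C}$ --- and using $|\phi_C|=|G_C|$, we get $\psi_1=-\frac{1}{|G_C|}\theta_{C,G_C}\sum_{d=0}^{|G_C|-1}d\phi_C^d$; the idempotence $\theta_{C,G_C}^2=\theta_{C,G_C}$ then gives $\psi_1^2=\frac{1}{|G_C|^2}\theta_{C,G_C}\left(\sum_{d=0}^{|G_C|-1}d\phi_C^d\right)^2$.

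Next I would evaluate the integral by splitting $\tau_{C,G_C}$ into its degree-zero part $\psi_1$ and degree-two part $\psi_1^2\phi_C x$:
\[
\int_C\ch(\sE_{C,\psi})\td(C)\,\tau_{C,G_C}=\psi_1\int_C\ch(\sE_{C,\psi})\td(C)+\psi_1^2\phi_C\int_C\ch(\sE_{C,\psi})\td(C)\,x.
\]
By Hirzebruch--Riemann--Roch on the curve $C$, the first integral is $\chi(\sE_{C,\psi})$; in the second only the top-degree contribution survives, so it equals $\rk(\sE_{C,\psi})\int_C x=-\rk(\sE_{C,\psi})\,C^2$, using $N_{C/X}=\mo_X(C)|_C$ and hence $\int_C x=-C^2$. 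Substituting the formulas for $\psi_1$ and $\psi_1^2$ produces
\[
\int_C\ch(\sE_{C,\psi})\td(C)\,\tau_{C,G_C}=-\theta_{C,G_C}\left(\frac{\chi(\sE_{C,\psi})}{|G_C|}\sum_{d=0}^{|G_C|-1}d\phi_C^d+\frac{(\rk\,\sE_{C,\psi})\,C^2}{|G_C|^2}\left(\sum_{d=0}^{|G_C|-1}d\phi_C^d\right)^2\phi_C\right),
\]
and feeding this into the displayed expression for $\Gamma(\sE)_C$ above gives the stated identity.

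The only step that needs care is the bookkeeping across the localization map $R(G_C)\to R(G_C)_{g_0}$: the class $\ch_{G_C}\lambda_{-1}(N^*)^{-1}$ has no preimage in $R(G_C)_\QQ$ by itself --- only $\tau_{C,G_C}$, its product with the idempotent $\theta_{C,G_C}$, does --- so $\theta_{C,G_C}$ must be carried through every manipulation and its idempotence invoked, which is exactly the content of Lemmas~\ref{lem: image to localization} and~\ref{lem:quasi-inverse-class}; once these are used, everything else is routine Chern-number arithmetic on the curve $C$.
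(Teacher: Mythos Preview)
Your proof is correct and follows essentially the same approach as the paper: specialize Example~\ref{ex: codim 1} to the surface case, expand $\tau_{C,G_C}$ to first order in $c_1(N^*)$ via Lemmas~\ref{lem: image to localization} and~\ref{lem:quasi-inverse-class}, then evaluate the integral using Riemann--Roch and $\int_C c_1(N^*)=-C^2$. The only cosmetic difference is that the paper writes the degree-zero and degree-two pieces of $\tau_{C,G_C}$ directly as $\frac{\theta_{C,G_C}}{1-\phi_C}$ and $\frac{\theta_{C,G_C}\phi_C}{(1-\phi_C)^2}c_1(N^*)$ before invoking Lemma~\ref{lem: image to localization}, whereas you route the same computation through the $A_k$-notation of Lemma~\ref{lem:quasi-inverse-class}.
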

\begin{proof}
Since $\dim X=2$ and $C$ is a curve, Example~\ref{ex: codim 1} gives $\theta_{C, G_C} =[1_{G_C}]-\frac{1}{|G_C|}[\CC[G_C]]$, and 
\begin{equation}
\begin{split}
\tau_{C, G_C} &= \frac{\theta_{C,G_C}}{1-\phi_C\cdot e^{c_1(N^*)}} = \frac{\theta_{C, G_C}}{1-\phi_C} + \frac{\theta_{C,G_C}\phi_C}{(1-\phi_C)^2} c_1(N^*)\\
&=-\frac{\theta_{C,G_C}}{|G_C|}\sum_{d=0}^{|G_C|-1}d\phi_C^d + \frac{\theta_{C,G_C}\phi_C}{|G_C|^2}\left(\sum_{d=0}^{|G_C|-1}d\phi_C^d \right)^2c_1(N^*).
\end{split}
\end{equation}
We may write
\[
\ch_{G_C}(\sE_{C, \psi}) = \left(\rk\, \sE_{C,\psi} + c_1(\sE_{C,\psi})\right)\psi.
\]
It follows that
\[
\begin{split}
 &\int_C\ch(\sE_{C,\psi})\td_{G_C}(C)\\
 =&\int_C\left(\rk\, \sE_{C,\psi} + c_1(\sE_{C,\psi}) \right)\left(\frac{\theta_{C,G_C}}{1-\phi_C} + \frac{\theta_{C,G_C}\phi_C}{(1-\phi_C)^2} c_1(N^*)\right)\left(1+\frac{1}{2}c_1(T_C)\right) \\
=&\int_C\theta_{C,G_C}\left(\frac{\ch(\sE_{C,\psi})\cdot \td(C)}{1-\phi_C}+ \frac{(\rk\, \sE_{C,\psi})\cdot \phi_C}{(1-\phi_C)^2}c_1(N^*)\right).
\end{split}
\]
Note that \(\int_{C}\ch(\sE_{C,\psi})\cdot \td(C)=\chi(\sE_{C,\psi})\) and \(c_1(N^*)=-C^2\). Hence, the above expression equals 
\begin{equation}
\label{eq: dim Z=1 integrand}
\begin{split}
&\theta_{C,G_C}\left(\frac{\chi(\sE_{C,\psi})}{1-\phi_C} - \frac{\rk\, \sE_{C,\psi}\cdot\phi_C}{(1-\phi_C)^2}C^2\right) \\
=&-\theta_{C,G_C}\left(\frac{\chi(\sE_{C,\psi})}{|G_C|}\sum_{d=0}^{|G_C|-1}d\phi_C^d +\frac{\rk\, \sE_{C,\psi}}
{|G_C|^2}\phi_C\left(\sum_{d=0}^{|G_C|-1}d\phi_C^d \right)^2 C^2\right).
\end{split}
\end{equation}
Substituting \eqref{eq: dim Z=1 integrand} into \eqref{eq: codim 1} yields the desired equality.
\end{proof}

\begin{thm}\label{thm: Z2n}
    Let $X$ be a connected compact complex surface and $G\subset\Aut(X)$ an automorphism subgroup isomorphic to $(\ZZ/2\ZZ)^n$. Let $\sE$ be a $G$-equivariant locally free sheaf of rank $r$ on $X$. Suppose that there are $N$ zero-dimensional strata $P_1,\dots, P_N$ and $M$ one-dimensional strata $C_1,\dots, C_M$ for the $G$-action on $X$. Let $\sE|_{C_k} =\sE_{C_k}^+ \oplus \sE_{C_k}^-$ be the decomposition of $\sE|_{C_k}$ into eigensheaves with eigenvalues $1$ and $-1$ under the action of $G_{C_k}$, and let $r_k^+$ and $r_k^-$ be the ranks of $\sE_{C_k}^+$ and $\sE_{C_k}^-$ respectively.  Then the following holds.
    \begin{enumerate}[leftmargin=*]
        \item For each strata $Z\neq X$, $\sH_Z$ has exactly one element, denoted by $H_Z$.
        \item We have
        \begin{multline*}
        \chi_G(X, \sE) = \frac{1}{2^n}\chi(X, \sE)[\CC[G]] + \frac{1}{2^{n+1}} \sum_{i=1}^N \Ind_{H_{P_i}}^G \left([\sE|_{P_i}] - \frac{r}{2}[\CC[H_{P_i}]]\right)\\
        + \frac{1}{2^{n+1}}\sum_{k=1}^M\Ind_{H_{C_k}}^G\left(-(K_X\cdot C_k)(r_k^+-r_k^-)+2(\deg\sE_{C_k}^+-\deg\sE_{C_k}^-)\right)\left([1_{H_{C_k}}]-\frac{1}{2}[\CC[H_{C_k}]]\right)
        \end{multline*}
    \end{enumerate}
    \end{thm}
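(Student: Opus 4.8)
The plan is to feed the special structure of $G\cong(\ZZ/2\ZZ)^n$ into Theorem~\ref{thm: CW general}, after which every stratum contribution becomes an instance of Example~\ref{ex: dim 0} or Lemma~\ref{lem: Gamma_Z curve}. Throughout I use the standard fact that the fixed locus of a finite‑order holomorphic automorphism of a connected complex manifold is a smooth complex submanifold; in particular, the pointwise stabilizer $G_x$ of a point $x$ injects into $\GL(T_xX)$, and if a nonidentity automorphism fixes a point with trivial derivative there, then its fixed locus is open and hence equal to the whole connected component.

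For part (i), observe first that every subgroup of $(\ZZ/2\ZZ)^n$ is again elementary abelian of exponent $2$, so its only cyclic subgroups are $\{\id\}$ and its subgroups of order $2$. A stratum $Z\neq X$ has $\dim Z\in\{0,1\}$ and is a component of some $X^g$ with $g\neq\id$, so $G_Z\neq\{\id\}$. If $\dim Z=1$, let $\phi_Z\colon G_Z\to\CC^*$ be the action on the rank‑$1$ normal bundle $N_{Z/X}$; any nontrivial $h\in\ker\phi_Z$ would fix $Z$ pointwise with trivial normal action, hence would act trivially near $Z$, hence on all of $X$, forcing $h=\id$ — a contradiction. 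Thus $|G_Z|=2$ and $\sH_Z=\{G_Z\}$. If $\dim Z=0$, say $Z=\{P\}$, then $G_P$ embeds in $\GL(T_PX)\cong\GL_2(\CC)$ as a finite group of exponent $2$, hence is diagonalizable and conjugate into $\{\pm1\}^2$, so $|G_P|\leq 4$; since $P$ is a component of some $X^g$, the automorphism $g\in G_P$ acts on $T_PX$ with no nonzero fixed vector, i.e.\ as $-\id$, and $-\id$ is the unique involution of $\{\pm1\}^2$ with this property, so $\sH_P=\{\langle g\rangle\}$ in both the case $|G_P|=2$ and the case $|G_P|=4$. In every case $\sH_Z$ is a singleton, which we denote $H_Z$.

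For part (ii), I apply Theorem~\ref{thm: CW general} with $|G|=2^n$. By connectedness and faithfulness the only $2$‑dimensional stratum is $X=X^{\id}$, with $G_X=\{\id\}$, so $\Gamma(\sE)_X=0$ by Lemma~\ref{lem: Gamma_Z 0}(ii); hence the remaining strata are exactly $P_1,\dots,P_N$ and $C_1,\dots,C_M$. For a point $P_i$, part (i) gives $H_{P_i}$ of order $2$ acting as $-\id$ on $T_{P_i}X$, so the cotangent space lies entirely in the sign eigenspace; Example~\ref{ex: dim 0} then yields $\Gamma(\sE)_{P_i}=\tfrac{2}{2^n}\cdot\tfrac14\,\Ind_{H_{P_i}}^G(\theta_{H_{P_i}}\,[\sE|_{P_i}]_{H_{P_i}})$, and combining $\theta_{H_{P_i}}=[1_{H_{P_i}}]-\tfrac12[\CC[H_{P_i}]]$ with the identity $[\CC[H_{P_i}]]\cdot[\sE|_{P_i}]_{H_{P_i}}=r\,[\CC[H_{P_i}]]$ simplifies this to $\tfrac{1}{2^{n+1}}\Ind_{H_{P_i}}^G([\sE|_{P_i}]-\tfrac r2[\CC[H_{P_i}]])$. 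For a curve $C_k$, part (i) gives $G_{C_k}=H_{C_k}$ cyclic of order $2$ with $\phi_{C_k}$ the sign character; in Lemma~\ref{lem: Gamma_Z curve} I substitute $\sum_{d=0}^{1}d\phi_{C_k}^d=\phi_{C_k}$ and $\phi_{C_k}^2=1$, which collapses the bracket to an expression in $\chi(\sE_{C_k}^\pm)$, $r_k^\pm$ and $C_k^2$. Replacing $\chi(\sE_{C_k}^\pm)=\deg\sE_{C_k}^\pm+r_k^\pm(1-g_k)$ by Riemann--Roch on the smooth compact curve $C_k$ of genus $g_k$, and using adjunction in the form $2(1-g_k)+C_k^2=-(K_X\cdot C_k)$ to eliminate $g_k$ and $C_k^2$, produces exactly $\tfrac{1}{2^{n+1}}\Ind_{H_{C_k}}^G\bigl(-(K_X\cdot C_k)(r_k^+-r_k^-)+2(\deg\sE_{C_k}^+-\deg\sE_{C_k}^-)\bigr)\bigl([1_{H_{C_k}}]-\tfrac12[\CC[H_{C_k}]]\bigr)$. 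Summing $\Gamma(\sE)_X+\sum_i\Gamma(\sE)_{P_i}+\sum_k\Gamma(\sE)_{C_k}$ and adding the free‑orbit term $\tfrac1{2^n}\chi(X,\sE)[\CC[G]]$ gives the claimed identity.

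The routine steps are the sign‑character bookkeeping at the points and the Riemann--Roch/adjunction substitution at the curves; the delicate point is part (i) — ruling out stabilizers of order $\geq 4$ for curve strata and identifying which order‑$2$ subgroup is relevant at a point — since this is exactly what forces $\sH_Z$ to be a singleton and makes the normal character the sign character, so that Example~\ref{ex: dim 0} and Lemma~\ref{lem: Gamma_Z curve} apply verbatim rather than through a sum over $\sH_Z$.
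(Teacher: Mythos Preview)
Your proof is correct and follows essentially the same route as the paper: apply Theorem~\ref{thm: CW general}, discard the $Z=X$ term via Lemma~\ref{lem: Gamma_Z 0}, then evaluate each remaining $\Gamma(\sE)_Z$ through Example~\ref{ex: dim 0} (for points) and Lemma~\ref{lem: Gamma_Z curve} (for curves), finishing with the same Riemann--Roch and adjunction substitutions. The only noteworthy difference is in part~(i): the paper outsources the point case (bounding $|G_P|\le 4$ and isolating the unique involution with $P$ as an isolated fixed point) to \cite[Lemmas~2.1 and 2.7]{CL24}, whereas you give the self-contained linear-algebra argument via $G_P\hookrightarrow\GL(T_PX)$ and diagonalization into $\{\pm1\}^2$---this is a mild improvement in that it keeps the proof internal to the paper.
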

\begin{proof}
(i) If $\dim Z=1$, then $G_Z$ is cyclic and hence has order 2. Clearly, $\sH_Z=\{G_Z\}$. If $\dim Z=0$, then $|G_Z|\leq 2^2$ by \cite[Lemma~2.1]{CL24}. By \cite[Lemma~2.7]{CL24}, there is exactly one involution $\sigma\in G_Z$ having $Z$ as an isolated fixed point, so $\sH_Z=\{\langle\sigma\rangle\}$. 

(ii) Based on (i), we have $\theta_{H_Z} =\theta_{Z,H_Z} =[1_{H_Z}]-\frac{1}{2}[\CC[H_Z]]$. We now compute $\Gamma(\sE)_Z$ for each stratum of $(X, G)$. Denote by $\phi_Z$ the class of the nontrivial simple module of $H_Z$.

Since $G$-action on $X$ is faithful, we have $\Gamma(\sE)_Z=0$ if $Z=X$; see Example~\ref{ex: codim 0}.

If $Z= C_k$ for some $1\leq k\leq M$, then by Lemma~\ref{lem: Gamma_Z curve}, we have
\begin{equation}
    \begin{split}
        \Gamma(\sE)_{Z} &        
= \frac{1}{2^{n-1}}\Ind_{H_Z}^G
            ([1_{H_Z}]-\frac{1}{2}[\CC[H_Z]]) 
            \left(\left(-\frac{1}{2}\chi(\sE_Z^+)-\frac{1}{4}r^+Z^2\right)\phi_Z +\left(-\frac{1}{2}\chi(\sE_Z^-)-\frac{1}{4}r^-Z^2\right)\right)\\
        &= \frac{1}{2^{n+1}}\Ind_{H_{Z}}^G\left((r_k^+-r_k^-)\cdot Z^2+2(\chi(\sE_{Z}^+)-\chi(\sE_{Z}^-))\right)\left([1_{H_{Z}}]-\frac{1}{2}[\CC[H_{Z}]]\right) \\
        & = \frac{1}{2^{n+1}}\Ind_{H_{Z}}^G\left(-(K_X\cdot Z)(r_k^+-r_k^-)+2(\deg\sE_{Z}^+-\deg \sE_{Z}^-)\right)\left([1_{H_{Z}}]-\frac{1}{2}[\CC[H_{Z}]]\right) 
    \end{split}
\end{equation}
where we used the Riemann--Roch theorem $\chi(Z, \sF)=(\rk \sF)\chi(\sO_Z)+\deg\sF$ for a locally free sheaf $\sF$ on $Z$, the adjuction formua $(K_X+Z)\cdot Z+2\chi(Z,\sO_Z)=0$, and the identity
\[
\left([1_{H_Z}]-\frac{1}{2}[\CC[H_Z]]\right) \phi_Z = -  \left([1_{H_Z}]-\frac{1}{2}[\CC[H_Z]]\right).
\]

If $Z = P_i$ for some $1\leq i\leq N$, then by Example~\ref{ex: dim 0}
\begin{equation}
\begin{split}
 \Gamma(\sE)_{Z} & = \frac{|H_{Z}|}{|G|}\Ind_{H_{Z}}^G \theta_{H_{Z}}[\sE|_{Z}]\left(\frac{1}{2}\phi_{Z}\right)^2   \\ &= \frac{1}{2^{n-1}}\Ind_{H_{Z}}^G\frac{1}{4}\left([1_{H_{Z}}]-\frac{1}{2}[\CC[H_{Z}]]\right)[\sE|_{Z}] \\
 &=\frac{1}{2^{n+1}}\left([\sE|_{Z}] - \frac{r}{2}[\CC[H_{Z}]]\right).
\end{split}
\end{equation}

Summing $\Gamma(\sE)_Z$ over all the strata $Z$ and applying Theorem~\ref{thm: CW general} yields the desired equality in (ii).
\end{proof}

Applying Theorem~\ref{thm: Z2n} to the sheaves $\sE=\sO_X(nK_X)$ for $n\in \ZZ$, and $\Omega_X^1$, we obtain more explicit results.

\begin{ex}
Take $\sE=\Omega_X^1$ in Theorem~\ref{thm: Z2n}. For a zero-dimensional stratum $Z= P_i$, we have 
\[
[\Omega_X^1|_{Z}] = [T_Z^*X] = 2[\phi_Z]
\]

For a one-dimensional stratrum $Z=C_k$,  the short exact sequence $0\rightarrow N_{Z/X}^*\rightarrow \Omega_X^1|_Z\rightarrow \Omega_Z^1\rightarrow 0$ is $H_Z$-equivariant. Here, $H_Z$ acts on $N_{Z/X}^*$ by the nontrivial character $\phi_Z$ and on $\Omega_Z^1$ by the trivial character $1_Z$. Thus the sequence gives an eigen-subsheaf decomposition: 
\[
\Omega_X^1|_Z = N_{Z/X}^*\oplus \Omega_Z^1
\]
In the notation of Theorem~\ref{thm: Z2n}, for each $1\leq k\leq M$, 
\[
r_k^+= \rk\, \Omega_Z^1=1,\quad r_k^-= \rk\, N_{Z/X}^*=1, \quad \deg N_{Z/X}^*=-Z^2,\quad \deg \Omega_Z^1 = 2g(Z)-2
\]

Using the exact sequence $0\rightarrow \CC\xrightarrow{\bar\partial} \sO_X\xrightarrow{\bar\partial}\Omega_X^1\xrightarrow{\bar\partial} \Omega_X^2\rightarrow 0$, we find
\[
\chi(X, \Omega_X^1)= -\chi(X, \CC) +\chi(X,\sO_X)+\chi(X, \Omega_X^2)= K_X^2-10\chi(X, \sO_X)
\]
where we use the Serre duality $\chi(X, \Omega_X^2) = \chi(X,\sO_X)$ and  the Noether formula $\chi(X, \CC)=12\chi(\sO_X)-K_X^2$ for the second equality. Alternatively, one may apply the Hirzebruch--Riemann--Roch theorem directly.

By Theorem~\ref{thm: Z2n} (ii), we obtain, 
\begin{multline*}
        \chi_G(X, \Omega_X^1) = \frac{1}{2^n}(K_X^2-10\chi(\sO_X))[\CC[G]] -\frac{1}{2^{n}} \sum_{i=1}^N \Ind_{H_{P_i}}^G \left([1_{H_{P_i}}] -\frac{1}{2}[\CC[H_{P_i}]]\right)\\
        + \frac{1}{2^{n}}\sum_{k=1}^M\Ind_{H_{C_k}}^G(2g(C_k)-2+
        C_k^2)\left([1_{H_{C_k}}]-\frac{1}{2}[\CC[H_{C_k}]]\right)
        \end{multline*}
\end{ex}

\begin{ex}
Now take $\sE=\sO_X(nK_X)$, $n\in \ZZ$. This sheaf is invertible.

If $Z=P_i$ is a point, then the action near $Z$ can be linearized analytically locally as $(x,y)\mapsto (-x,-y)$, which fixes the local basis $(dx\wedge dy)^{\otimes n}$ of $\sO_X(nK_X)$. Thus,
\[
[\sO_X(nK_X)|_Z] = [1_{H_Z}]
\]

If $Z= C_k$ is a curve, then there is analytically local coordinates $x,y$ so that $Z=(x=0)$ and the involution in $H_Z$ acts as $(x,y)\mapsto(-x,y)$. Therefore, $H_Z$ acts on $\sO_X(nK_X)$ by the character $\phi_Z^n$. Hence, we have 
\[
r_k^+ - r_k^- = (-1)^n, \quad \deg\sO_X(nK_X)_Z^+ - \deg\sO_X(nK_X)_Z^- = (-1)^nn(K_X\cdot Z)
\]

Also, for $Z=X$, the Riemann--Roch theorem gives
\[
\chi(X, \sO_X(nK_X)) = \chi(X, \sO_X) + \frac{1}{2}n(n-1)K_X^2
\]

Substituting these computations into Theorem~\ref{thm: Z2n} (ii) with $\sE=\sO_X(nK_X)$ yields:
\begin{multline*}
\chi_G(X, nK_X) = \frac{1}{2^n}\left(\chi(X, \sO_X) + \frac{1}{2}n(n-1)K_X^2\right)[\CC[G]] + \frac{1}{2^{n+1}}\sum_{1\leq i\leq N}\Ind_{H_{P_i}}^G ([1_{H_{P_i}}]-\frac{1}{2}[\CC[H_{P_i}]]) \\
+ (-1)^n\frac{2n-1}{2^{n+1}}\sum_{1\leq k\leq M}\Ind_{H_{C_k}}^G (K_X\cdot C_k)([1_{H_{C_k}}]-\frac{1}{2}[\CC[H_{C_k}]]) .  
\end{multline*}
\end{ex}

\end{document}